\documentclass[11pt,makeidx, reqno]{amsart}

\usepackage{amsthm,amsfonts,amssymb,amsmath,amscd,amsrefs,thmtools}
\usepackage[all]{xy}
\usepackage{xr-hyper}
\usepackage{color}
\usepackage{bbm}
\definecolor{crimsonglory}{rgb}{0.75, 0.0, 0.2}
\definecolor{darkblue}{rgb}{0.0, 0.0, 0.55}
	\definecolor{deepskyblue}{rgb}{0.0, 0.75, 1.0}

\usepackage[colorlinks=true,linkcolor = blue, citecolor = deepskyblue]{hyperref}

\usepackage[OT2,OT1]{fontenc}
\usepackage{mathrsfs}
\usepackage{multirow}
\usepackage{graphicx}

\usepackage{xcolor}
\definecolor{bg}{RGB}{45,34,25}      
\definecolor{txt}{RGB}{230,210,180} 

\topmargin = 0.2 in
\headheight = 0.0 in
\headsep = 0.4 in
\parskip = 0.075 in
\parindent = 0.3 in
\textwidth = 6 in
\textheight = 8.5 in
\oddsidemargin = 0.25 in
\evensidemargin = 0.25 in
\hfuzz 12pt
\overfullrule 0pt

\newtheoremstyle{mystyle}
  {8pt}
  {3pt}
  {\em}
  {}
  {\scshape}
  {.}
  {3pt}
  {}

\newcommand\ddfrac[2]{\frac{\displaystyle #1}{\displaystyle #2}}

\theoremstyle{mystyle}

\newcounter{exe}
  
\newtheorem{thm}{\textsc{Theorem}}

\newtheorem{lemma}[exe]{\textsc{Lemma}}
\newtheorem{prop}[exe]{\textsc{Proposition}}

\theoremstyle{definition}

\usepackage{color}


\title[]{Extreme values of class group $L$-functions}
\author{Jo\~ao Campos-Vargas}
\email{joaoccv@stanford.edu}
\address{Department of Mathematics, Stanford University, Stanford, CA 94305.}
\date{\today}

\begin{document}

\begin{abstract} We give a lower bound for the maximum value of class group $L$-functions attached to $\mathbb{Q}(\sqrt{-D})$ at the central point and show that this value is on average at least \[ \exp\left(\delta\sqrt{\frac{\log D \log \log \log D}{\log \log D}} \right)
\]
for any $\delta < \frac{1}{4}$.
\end{abstract}

\maketitle

\section{Introduction}\label{intro}

\par The goal of this paper is to exhibit large values of $L(1/2, \chi)$ where $\chi$ ranges over class group characters of the imaginary quadratic field $\mathbb{Q}(\sqrt{-D})$. The Lindel\"{o}f hypothesis predicts that $L(1/2, \chi) \ll_\epsilon D^\epsilon$, and a more quantitative result proved by Chandee and Soundararajan \cite{chandeeBounding} states that under GRH
\[L(1/2, \chi) \le \exp\left(\left(\frac{\log 2}{2}+o(1)\right) \frac{\log D}{\log \log D}\right).\]
Yet, based on probabilistic considerations, Farmer, Gonek, and Hughes \cite{farmer2007maximum} conjectured that the true order of magnitude is
\begin{equation} \label{farmerconj}
    \max_{\chi} |L(1/2, \chi)| = \exp{((c + o(1))\sqrt{\log D \log \log D})}
\end{equation}
for some positive constant $c$, and this remains the most popular belief.

\par Building on the resonance method of Soundararajan \cite{soundararajan2007extreme}, Bondarenko and Seip \cite{bondarenko2017large, bondarenkoExtreme} showed that
\[\max_{t\in [1, T]} |\zeta(1/2+it)| \ge \exp\left((1+o(1))\sqrt{\frac{\log T \log \log \log T}{\log \log T}} \right).\]
The constant appearing in their exponent was optimized by de La Bret\`{e}che and Tenenbaum \cite{de2018sommes}, who further obtained the same order of magnitude in the family of Dirichlet $L$-functions of a given modulus. Under GRH, Darbar and Maiti \cite{darbar2024large} also obtained this order of magnitude for quadratic Dirichlet $L$-functions.

\par The main result of this paper shows that this order of magnitude holds on average for class group $L$-functions over imaginary quadratic fields. This seems to be the first instance where this bound is proved unconditionally in a sympletic family, save for a function field example of Darbar and Maiti \cite{darbar2023large}.
\newpage

\begin{thm}\label{main} Let $\delta < \frac{1}{4}$. Given a fundamental discriminant $-D$, let $M_D$ be the maximum value of $L(1/2, \chi)$ where $\chi$ ranges over the characters of the class group of the imaginary quadratic field $\mathbb{Q}(\sqrt{-D})$. Let $N_X$ denote the number of fundamental discriminants $-D$ with $D \in [X, 2X]$. Then for $X$ sufficiently large
\[\Bigg({\sideset{}{^\flat} \prod_{X \le D \le 2X} M_D}\Bigg)^{\frac{1}{N_X}} \ge \exp\Bigg(\delta \sqrt{\frac{\log X \log \log \log X}{\log \log X}} \Bigg),\]
where $\displaystyle \sideset{}{^\flat} \prod_{X \le D \le 2X}$ indicates a product over fundamental discriminants $-2X \le -D \le -X$.
\end{thm}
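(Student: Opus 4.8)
The plan is to run the resonance method of Soundararajan in the refined form of Bondarenko--Seip and de la Bret\`eche--Tenenbaum, with resonators adapted to the primes that split in $\mathbb{Q}(\sqrt{-D})$, and to average the resulting bounds over $D$. The coefficients $\lambda_\chi(n)=\sum_{N\mathfrak{a}=n}\chi([\mathfrak{a}])$ are real, so $L(1/2,\chi)$ is real, and the functional equation of $L(s,\chi)$ has root number $+1$; hence for any resonator $R_D(\chi)=\sum_n r(n)\lambda_\chi(n)$,
\[
M_D \;\ge\; \frac{\sum_{\chi}|R_D(\chi)|^2\,L(1/2,\chi)}{\sum_{\chi}|R_D(\chi)|^2}.
\]
I would prove that for all but $o(N_X)$ fundamental discriminants $-D$ in $[-2X,-X]$ the right-hand side exceeds $\exp\!\bigl((\delta'+o(1))\sqrt{\log X\log\log\log X/\log\log X}\bigr)$ for some fixed $\delta'\in(\delta,\tfrac14)$. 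Since crudely $M_D\ge h_D^{-1}\sum_\chi L(1/2,\chi)\gg\log D$ for \emph{every} $D$ (positivity of the first moment), the exceptional discriminants contribute a non-negative amount to $\frac1{N_X}\sum_{D}\log M_D$, and the stated bound on the geometric mean follows.

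For the inner ratio I would use the approximate functional equation in balanced form, $L(1/2,\chi)=2\sum_{l}\lambda_\chi(l)\,l^{-1/2}W(l/\sqrt D)$ with $W$ smooth and rapidly decaying, expand $|R_D(\chi)|^2$ via the Hecke relations $\lambda_\chi(m)\lambda_\chi(n)=\sum_{d\mid(m,n)}\chi_{-D}(d)\lambda_\chi(mn/d^2)$ (here $\chi_{-D}$ is the nebentypus of the associated weight-one form), and invoke orthogonality of class-group characters in the shape $\sum_\chi\lambda_\chi(j)=h_D\cdot\#\{\text{principal ideals of norm }j\}$. The decisive arithmetic input is that a principal ideal of $\mathbb{Q}(\sqrt{-D})$ of norm below $D/4$ is generated by a rational integer; since all arguments $j$ that occur are $\ll N^2\sqrt D\,\log D$, they stay in this range as long as the resonator length $N$ is at most $D^{1/4-\varepsilon}$, and then every principal-ideal count collapses to $\mathbf{1}[j=\square]$. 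This makes the computation \emph{exact for each individual $D$} (up to an exponentially small tail), with \emph{no surviving off-diagonal term}: one finds that the ratio equals
\[
c_D\,(\log D)\,\frac{\sum_{m,n}r(m)r(n)\sum_{d\mid(m,n)}\tau_{-D}(mn/d^2)\,(mn/d^2)^{-1/2}}{\sum_m r(m)^2\,\tau_{-D}(m)},\qquad c_D\asymp 1,
\]
where $\tau_{-D}(k)$ is the number of integral ideals of norm $k$ in $\mathbb{Q}(\sqrt{-D})$ --- a sum closely related to the G\'al-type GCD sums analysed by Bondarenko--Seip. Since $\tau_{-D}(m)$ vanishes unless every prime factor of $m$ splits in $\mathbb{Q}(\sqrt{-D})$, the resonator automatically sees only the split primes $\mathcal{P}_D$.

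It remains to choose $r$ so that this GCD sum is large. I would take $r$ supported, following Bondarenko--Seip, on a structured family of squarefree integers built from primes in a suitable window $[P_1,P_2]$ (with $P_2$ a fixed power of $\log D$); by the previous paragraph it is the split primes $\mathcal{P}_D\subseteq[P_1,P_2]$ that are effective. The Bondarenko--Seip estimates for the relevant GCD sums, re-run with the primes restricted to $\mathcal{P}_D$, yield the claimed size \emph{provided} $\mathcal{P}_D$ occupies close to the expected proportion of each dyadic subrange of $[P_1,P_2]$; and this holds for all but $o(N_X)$ discriminants $-D$ by a standard second-moment (large-sieve) bound for $\sum_{p}\chi_{-D}(p)$ over such subranges as $D$ ranges over $[X,2X]$ (the handful of ramified primes $p\mid D$ being negligible). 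Combining with the lower bound for $M_D$ and the trivial bound for the exceptional discriminants completes the argument.

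The crux --- and the point where one dispenses with GRH, in contrast with the conditional results of Darbar--Maiti for quadratic Dirichlet $L$-functions --- is this last step: showing that for almost all $D$ the Kronecker symbol $\chi_{-D}(\cdot)$ is equidistributed over the short window $[P_1,P_2]$ finely enough to host the Bondarenko--Seip resonator, and then controlling the attendant GCD sums uniformly across the family. The loss to $\delta<\tfrac14$, as opposed to the sharper exponents available in the $t$-aspect for $\zeta$, stems from the two constraints above: the approximate functional equation caps the useful resonator length at $D^{1/4}$, and only the primes splitting in $\mathbb{Q}(\sqrt{-D})$, roughly half of the window for a typical $D$, contribute; I make no attempt to optimise the resulting constant.
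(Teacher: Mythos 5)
Your proposal is structurally different from the paper in two ways, and there is one genuine gap that needs to be addressed.

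\textbf{Different routes.} The paper builds the resonator \emph{on classes}, setting $R_\chi=\sum_{\mathcal{A}}\chi(\mathcal{A})r(\mathcal{A})$ with $r(\mathcal{A})=\bigl(\sum_{\mathfrak{a}\in\mathcal{M},[\mathfrak{a}]=\mathcal{A}}f(\mathfrak{a})^2\bigr)^{1/2}$, and then uses Cauchy--Schwarz within each class to extract the ideal-theoretic GCD sum $\sum_{\mathfrak{m}\mid\mathfrak{n}}f(\mathfrak{m})f(\mathfrak{n})/\sqrt{N(\mathfrak{n}/\mathfrak{m})}$ as a \emph{lower} bound for $V_0$. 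You instead use the Hecke-coefficient resonator $R_D(\chi)=\sum_n r(n)\lambda_\chi(n)$ and push through orthogonality via the identity $\sum_\chi\lambda_\chi(j)=h_D\,\#\{\text{principal ideals of norm }j\}$ together with the observation that a principal ideal of norm $<D/4$ is generated by a rational integer. That observation is correct and does make the counts collapse to squares for $N\le D^{1/4-\varepsilon}$. Both arrive at G\'al-type GCD sums supported on split primes. The second structural difference is in the averaging: the paper multiplies the individual lower bounds over $D$ (so it averages the \emph{logarithms} directly) and only needs the first-moment sieve bound $\sum_{D}(\tfrac{-D}{p})=O(p\sqrt X)$ of Lemma~\ref{crivo}; you instead establish the pointwise bound for all but $o(N_X)$ discriminants and throw away the exceptions. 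Your route requires a second-moment/large-sieve estimate for $\sum_{p\in I_k}\chi_{-D}(p)$ uniformly over each dyadic subwindow $I_k$, followed by a union bound over the $\sim(\log_2 X)^\gamma$ windows; this is doable (the windows grow geometrically, so the union bound converges), but it is strictly more work than the paper's direct averaging of the exponent, and the restricted Bondarenko--Seip GCD estimate with primes confined to $\mathcal{P}_D\cap I_k$ of near-expected density would need to be spelled out.

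\textbf{The gap.} Your claim that the inner ratio is computed ``exactly for each individual $D$\ldots with no surviving off-diagonal term'' silently applies the approximate functional equation $L(1/2,\chi)=2\sum_l\lambda_\chi(l)l^{-1/2}W(l/\sqrt D)$ to the \emph{trivial} character $\chi_0$. That identity fails for $\chi_0$: $L(s,\chi_0)=\zeta_K(s)$ has a pole at $s=1$, and the contour shift producing the AFE picks up a residue. Consequently the quantity you actually compute is not $\sum_\chi L(1/2,\chi)|R_D(\chi)|^2$ but rather $\sum_{\chi\ne\chi_0}L(1/2,\chi)|R_D(\chi)|^2$ plus the ``fake'' trivial-character term $E_0:=\bigl(2\sum_l\lambda_{\chi_0}(l)l^{-1/2}W(l/\sqrt D)\bigr)|R_D(\chi_0)|^2$, which by Lemma~\ref{convex} is of size $\asymp D^{1/4}\log D\cdot|R_D(\chi_0)|^2$. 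Since $|R_D(\chi_0)|^2=\bigl(\sum_n r(n)\tau_{-D}(n)\bigr)^2$ can be as large as $|\mathcal{M}|\cdot W/h_D$ (Cauchy--Schwarz), this term threatens to dominate unless the resonator length is capped by $h_D/(D^{1/4}\log D)$ rather than merely $D^{1/4-\varepsilon}$. This is precisely the paper's \emph{trivial character constraint} (\ref{sizebound}); to state it you then need an effective lower bound for $h_D$, which the paper obtains from Tatuzawa's effective Siegel theorem (modulo one exceptional discriminant). Your sketch omits both the constraint and the effective class-number input, and without them the claimed ``exact'' ratio is not a valid lower bound for $M_D$. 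Once this is fixed, your $\delta<1/4$ budget ($N\le D^{1/4-\varepsilon}$ and the factor $1/2$ for the split-prime density) matches the paper's loss, so the rest should go through.
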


\par The averaging feature is needed to ensure that sufficiently many small primes split in $\mathbb{Q}(\sqrt{-D})$, which we expect but do not know unconditionally. The method requires primes of size $(\log D)^{1+\epsilon}$, and even GRH does not seem sufficient to produce enough split primes of this size for every discriminant. However, GRH does yield
\[\max_{\chi} L(1/2, \chi) \ge \exp\Bigg((c+o(1)) \sqrt{\frac{\log D}{\log \log D}}\Bigg)\]
as $\chi$ ranges over the class group characters of $\mathbb{Q}(\sqrt{-D})$ for some positive $c$. This is obtained by adapting the parameters in \cite{soundararajan2007extreme}, which only require primes greater than $\log^2 D$.

\par Instead of proving this result, we describe the role of small split primes in the following more precise technical version of our main theorem.  (In what  follows, $\log_k$ denotes \hbox{the $k$-fold} iterated logarithm.) 

\begin{thm}\label{secondmain}
    Given a fundamental discriminant $-D$, let $h_D$ be the class number of $\mathbb{Q}(\sqrt{-D})$ and $\mathcal{C}$ be its class group. For $M \le h_D/3 D^\frac{1}{4}\log D$, let $\mathbb{P}$ be the set of prime ideals in $\mathbb{Q}(\sqrt{-D})$ above primes in $(e \log M \log_2 M,  \exp(\lfloor(\log_2 M)^\frac{1}{3}\rfloor) \log M \log_2M]$. Then
    \[\max_{\chi \in \widehat{\mathcal{C}}} L(1/2, \chi) \ge \exp \Bigg((1 + o(1))\sqrt{\frac{\log M \log_2 M}{\log_3 M}} \sum_{\mathfrak{p} \in \mathbb{P}} \frac{1}{\sqrt{N\mathfrak{p}}} \frac{1}{\sqrt{p}(\log p - \log_2 M - \log_3 M)}\Bigg),\]
    where $p$ denotes the prime below $\mathfrak{p} \in \mathbb{P}$.
\end{thm}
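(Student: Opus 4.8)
The plan is to run Soundararajan's resonance method, in the long-resonator form of Bondarenko--Seip and de la Bret\`eche--Tenenbaum, over the dual group $\widehat{\mathcal{C}}$. Fix a resonator $R(\chi)=\sum_{\mathfrak{n}}r(\mathfrak{n})\chi(\mathfrak{n})$ with real nonnegative coefficients $r$ supported on squarefree products of the split prime ideals lying above primes in $\mathbb{P}$, of norm at most $M$. Since $|R(\chi)|^{2}\ge 0$ and, pairing $\chi$ with $\bar\chi$, the numerator below is real,
\[
\max_{\chi\in\widehat{\mathcal{C}}}|L(1/2,\chi)|\;\ge\;\max_{\chi}\Re\,L(1/2,\chi)\;\ge\;\frac{\sum_{\chi}|R(\chi)|^{2}\,L(1/2,\chi)}{\sum_{\chi}|R(\chi)|^{2}},
\]
so it suffices to estimate the two sums. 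For the denominator, orthogonality ($\sum_{\chi}\chi(\mathfrak{b})$ equals $h_{D}$ when $\mathfrak{b}$ is principal and $0$ otherwise) gives $\sum_{\chi}|R(\chi)|^{2}=h_{D}\sum_{[\mathfrak{m}]=[\mathfrak{n}]}r(\mathfrak{m})r(\mathfrak{n})$, with diagonal term $h_{D}\sum_{\mathfrak{n}}r(\mathfrak{n})^{2}$; I would bound the whole expression above by $(1+o(1))h_{D}\sum_{\mathfrak{n}}r(\mathfrak{n})^{2}$, the off-diagonal being controlled exactly as below.

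For the numerator I would insert an approximate functional equation for the Hecke $L$-function $L(s,\chi)$ (conductor $D$), reducing matters to sums $\sum_{\mathfrak{a}}\chi(\mathfrak{a})(N\mathfrak{a})^{-1/2}V(2\pi N\mathfrak{a}/\sqrt{D})$ and their $\bar\chi$-duals, and then apply orthogonality once more, which singles out the terms in which $\mathfrak{m}\mathfrak{a}\overline{\mathfrak{n}}$ is principal. The ``true diagonal'' $\mathfrak{m}\mathfrak{a}=\mathfrak{n}$ (forcing $\mathfrak{m}\mid\mathfrak{n}$, $\mathfrak{a}=\mathfrak{n}/\mathfrak{m}$, hence $N\mathfrak{a}\le M\ll\sqrt{D}$ and $V\approx 1$) contributes
\[
h_{D}\sum_{\mathfrak{n}}r(\mathfrak{n})^{2}\prod_{\mathfrak{p}\mid\mathfrak{n}}\Big(1+\frac{1}{r(\mathfrak{p})\sqrt{N\mathfrak{p}}}\Big),
\]
which is positive, so the ratio is bounded below by $(1+o(1))\prod_{\mathfrak{p}}\big(1+\tfrac{r(\mathfrak{p})/\sqrt{N\mathfrak{p}}}{1+r(\mathfrak{p})^{2}}\big)$ up to the restriction that $r$ lives among ideals of norm $\le M$. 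Taking $r(\mathfrak{p})$ morally equal to $\sqrt{\frac{\log M\log_{2}M}{\log_{3}M}}\cdot\frac{1}{\sqrt{p}(\log p-\log_{2}M-\log_{3}M)}$, and organizing the support by the $\lfloor(\log_{2}M)^{1/3}\rfloor$ geometric blocks into which $\mathbb{P}$ is already partitioned in the statement --- controlling how many prime factors $\mathfrak{n}$ may draw from each block so that its norm stays below $M$ while the product $\prod_\mathfrak{p}(1+\cdots)$ is preserved up to a factor $1+o(1)$ --- is precisely the optimization of de la Bret\`eche--Tenenbaum, which I would transport to this setting. Since the individual $r(\mathfrak{p})^{2}$ are $o(1)$, the product becomes $\exp\big((1+o(1))\sum_{\mathfrak{p}}r(\mathfrak{p})/\sqrt{N\mathfrak{p}}\big)$; and because inert and ramified primes contribute $o(1)$ to the sum $\sum_{\mathfrak{p}\in\mathbb{P}}(N\mathfrak{p})^{-1/2}(\sqrt{p}(\log p-\log_{2}M-\log_{3}M))^{-1}$ (for a split $\mathfrak{p}$ the summand is $1/(p(\log p-\log_{2}M-\log_{3}M))$), this is exactly the claimed lower bound.

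The main obstacle is the off-diagonal analysis in the numerator, which is genuinely harder than in the Dirichlet-$L$ case: class-group orthogonality detects principal ideals, not the unit ideal, so the near-diagonal terms are those with $\mathfrak{m}\mathfrak{a}\overline{\mathfrak{n}}=(\nu)$ for some $\nu\in\mathcal{O}_{K}\setminus\{0\}$ with $N\nu=N\mathfrak{m}\,N\mathfrak{a}\,N\mathfrak{n}$, and when $\mathfrak{m}\mathfrak{a}\ne\mathfrak{n}$ these must be shown to be of lower order. I would organize this sum by $\nu$, bound the number of factorizations $(\nu)=\mathfrak{m}\mathfrak{a}\overline{\mathfrak{n}}$ into admissible pieces by a divisor estimate, and use a lattice-point count in $\mathcal{O}_{K}$ (a lattice of covolume $\sqrt{D}/2$) together with the largeness of the primes in $\mathbb{P}$; this is where the hypothesis $M\le h_{D}/(3D^{1/4}\log D)$ enters, the $\log D$ absorbing the divisor loss, and it is where the bulk of the work lies. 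The block analysis imported above is also what shows that truncating the resonator to norm $\le M$ costs only a factor $1+o(1)$. The remaining points are routine: the dual terms of the functional equation contribute only to lower order since their coefficients involve a root number rather than a character value (so orthogonality gives cancellation), apart from the $O(D^{\epsilon})$ real genus characters, which form a negligible subfamily; and $V\approx 1$ on the diagonal because $M\ll\sqrt{D}$. Feeding the resulting lower bound for the ratio into the displayed inequality finishes the proof.
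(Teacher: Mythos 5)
Your outline has the right skeleton — resonate over $\widehat{\mathcal{C}}$, insert an approximate functional equation, detect a diagonal by orthogonality — but it misses the two structural devices that make the paper's argument go through, and the steps you flag as ``the bulk of the work'' do not actually arise.

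The paper's resonator is built on ideal \emph{classes}, not ideals: $R_\chi=\sum_{\mathcal{A}\in\mathcal{C}}\chi(\mathcal{A})r(\mathcal{A})$ with $r(\mathcal{A})=\bigl(\sum_{\mathfrak{a}\in\mathcal{M},\,[\mathfrak{a}]=\mathcal{A}}f^2(\mathfrak{a})\bigr)^{1/2}$ for a multiplicative $f$. Because $R_\chi$ is a genuine finite Fourier series on $\mathcal{C}$, character orthogonality gives $\sum_\chi|R_\chi|^2=h_D\sum_{\mathcal{A}}r(\mathcal{A})^2$ \emph{exactly}, with no off-diagonal whatsoever; and in the numerator the Cauchy--Schwarz inequality $r(\mathcal{A})r(\mathcal{B})\ge\sum_{\mathfrak{m}\mathfrak{a}=\mathfrak{n},\,[\mathfrak{m}]=\mathcal{A},\,[\mathfrak{n}]=\mathcal{B}}f(\mathfrak{m})f(\mathfrak{n})$ yields a one-line \emph{lower} bound, so the terms with $(\nu)=\mathfrak{m}\mathfrak{a}\bar{\mathfrak{n}}$ principal but $\mathfrak{m}\mathfrak{a}\ne\mathfrak{n}$ are simply dropped (they are nonnegative) and never need to be estimated. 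Your resonator $R(\chi)=\sum_{\mathfrak{n}}r(\mathfrak{n})\chi(\mathfrak{n})$ over ideals instead produces a genuine off-diagonal in \emph{both} the denominator $h_D\sum_{[\mathfrak{m}]=[\mathfrak{n}]}r(\mathfrak{m})r(\mathfrak{n})$ (which you need bounded \emph{above}, and whose extra nonnegative terms point the wrong way) and the numerator; the lattice-point/divisor analysis you sketch to control these is not a detail but an unresolved core of your plan, and the paper simply avoids it.

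Second, the trivial character requires separate treatment that your proposal does not supply. Your ratio implicitly includes $\chi_0$, but $L(1/2,\chi_0)$ is a Dedekind zeta value with a pole at $1$; the paper excludes $\chi_0$ from $V$ and $W$, writes $V=V_0-E_0$, and controls $E_0$ via Lemma~\ref{convex} ($\sum_{\mathfrak{a}}(N\mathfrak{a})^{-1/2}W(\cdots)\le(1+o(1))D^{1/4}\log D$) together with Cauchy--Schwarz on $|R_{\chi_0}|^2$. \emph{That} is exactly where the hypothesis $M\le h_D/(3D^{1/4}\log D)$ is used --- not to absorb a divisor loss in an off-diagonal count as you suggest. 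Two smaller points: the root-number/genus-character worry is moot, since $L(1/2,\chi)$ is real and Lemma~\ref{afe} gives a single clean sum with no dual term; and Proposition~\ref{proposicao} leaves an $O(1)$ in the exponent that the paper removes by invoking the first-moment bound $\max_\chi L(1/2,\chi)\ge\exp(\log\log D+O(1))$, a step your outline omits.
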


\par The magnitude of this bound depends on the splitting of primes below $\mathbb{P}$ in $\mathbb{Q}(\sqrt{-D})$. Probabilistically, half of these primes should split, yielding a bound of magnitude of Theorem \ref{main} for every discriminant. Nevertheless, it seems plausible even under GRH that all these primes are inert and the bound above is trivial, which raises an interesting question. Could extreme values for every fundamental discriminant as predicted by (\ref{farmerconj}) tell us more about the splitting of primes than GRH does, or can one still attain that order of magnitude only using primes greater than $\log^2 D$?

\section{Technical preliminaries}\label{tech} We start with the standard approximate functional equation:

\begin{lemma}\label{afe} Let $\chi$ be a nontrivial character of the class group of $\mathbb{Q}(\sqrt{-D})$. Then,
\[ L(1/2, \chi) = 2 \sum_{\mathfrak{a} \ne 0} \frac{\chi(\mathfrak{a})}{\sqrt{N\mathfrak{a}}}W\left(\frac{2\pi N \mathfrak{a}}{\sqrt{D}}\right),\]
where
\[W(x) = \frac{1}{\Gamma(1/2)} \int_x^\infty t^{\frac{1}{2}}e^{-t} \, \frac{dt}{t} \text{ for } x \ge 0\]
is a positive, decreasing function  with $W(0) = 1$ and $W(x) \le \frac{1}{e^x}$ for $x \ge 1$.
\end{lemma}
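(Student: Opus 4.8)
The plan is the classical one: obtain the identity from the functional equation of the Hecke $L$-function $L(s,\chi)$ by inserting a Mellin representation of $W$ and shifting a contour, the only genuinely arithmetic input being the precise shape of that functional equation. Concretely, $L(s,\chi)=\sum_{\mathfrak{a}\neq 0}\chi(\mathfrak{a})(N\mathfrak{a})^{-s}$ converges absolutely for $\Re(s)>1$, and by Hecke's theory of $L$-functions of ideal class characters, since $\chi$ is nontrivial the completed function
\[\Lambda(s,\chi):=\Bigl(\tfrac{\sqrt{D}}{2\pi}\Bigr)^{s}\Gamma(s)\,L(s,\chi)\]
extends to an entire function of finite order, of polynomial growth in vertical strips, and satisfies $\Lambda(s,\chi)=\Lambda(1-s,\overline{\chi})$ with root number $+1$. (As $-D$ is a fundamental discriminant, $D=|d_{K}|$ and the archimedean factor of $L(s,\chi)$ is $\Gamma_{\mathbb{C}}(s)=2(2\pi)^{-s}\Gamma(s)$; the constant $2$ it carries is the source of the $2$ in the statement.) This functional equation is the one external ingredient I would invoke.

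Next I would record the facts about $W$. Interchanging the two integrals in the definition of $W$ gives, for $\Re(u)>0$, the Mellin transform $\int_{0}^{\infty}W(x)x^{u-1}\,dx=\frac{\Gamma(u+1/2)}{u\,\Gamma(1/2)}$, so Mellin inversion yields $W(x)=\frac{1}{2\pi i}\int_{(c)}\frac{\Gamma(u+1/2)}{u\,\Gamma(1/2)}\,x^{-u}\,du$ for every $c>0$, the integral converging absolutely by Stirling. The elementary properties come straight from the defining integral: positivity is clear, $W'(x)=-\frac{1}{\Gamma(1/2)}x^{-1/2}e^{-x}<0$ gives monotonicity, $W(0)=\Gamma(1/2)/\Gamma(1/2)=1$, and for $x\geq 1$ one has $W(x)=\frac{1}{\sqrt{\pi}}\int_{x}^{\infty}t^{-1/2}e^{-t}\,dt\leq\frac{1}{\sqrt{\pi}}\int_{x}^{\infty}e^{-t}\,dt\leq e^{-x}$, using $t^{-1/2}\leq 1$ on $[1,\infty)$ and $\sqrt{\pi}>1$.

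For the main computation, write $Q=\sqrt{D}/(2\pi)$ and $T(\chi):=\sum_{\mathfrak{a}\neq 0}\frac{\chi(\mathfrak{a})}{\sqrt{N\mathfrak{a}}}W(N\mathfrak{a}/Q)$, so that the assertion is $L(\tfrac12,\chi)=2T(\chi)$. Inserting the Mellin representation of $W$ and interchanging sum and integral on the line $\Re(u)=2$ — justified by absolute convergence of $L(3/2+it,\chi)$ and the exponential decay of $\Gamma(u+\tfrac12)/u$ — and using $Q^{u}\Gamma(u+\tfrac12)L(u+\tfrac12,\chi)=Q^{-1/2}\Lambda(u+\tfrac12,\chi)$, I obtain
\[\frac{1}{2\pi i}\int_{(2)}\frac{\Lambda(u+\tfrac12,\chi)}{u}\,du=Q^{1/2}\Gamma(1/2)\,T(\chi).\]
Since $\Lambda(\cdot,\chi)$ is entire and decays in vertical strips, I then shift the contour to $\Re(u)=-2$, crossing only the simple pole at $u=0$ with residue $\Lambda(\tfrac12,\chi)$; on the shifted line, substituting $u\mapsto-u$ and applying $\Lambda(\tfrac12-u,\chi)=\Lambda(\tfrac12+u,\overline{\chi})$ together with the same Mellin identity for $\overline{\chi}$ identifies the integral with $-Q^{1/2}\Gamma(1/2)\,T(\overline{\chi})$. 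Finally, Galois conjugation $\mathfrak{a}\mapsto\overline{\mathfrak{a}}$ is a norm-preserving bijection of nonzero integral ideals with $[\overline{\mathfrak{a}}]=[\mathfrak{a}]^{-1}$ in the class group, whence $\chi(\overline{\mathfrak{a}})=\overline{\chi(\mathfrak{a})}$ and $T(\overline{\chi})=T(\chi)$. Combining the three displays, $Q^{1/2}\Gamma(1/2)T(\chi)=\Lambda(\tfrac12,\chi)-Q^{1/2}\Gamma(1/2)T(\chi)$, and since $\Lambda(\tfrac12,\chi)=Q^{1/2}\Gamma(1/2)L(\tfrac12,\chi)$ this rearranges to $L(\tfrac12,\chi)=2T(\chi)$, as claimed.

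I do not anticipate a serious obstacle here: the lemma is routine once the functional equation is available. The points that genuinely need care are (i) invoking the functional equation in the correct normalization and with root number $+1$ — this is simultaneously the source of the factor $2$ and the reason the two dual sums collapse into a single sum — and (ii) the standard justifications for interchanging summation and integration and for shifting the contour (absolute convergence on the line $\Re(u)=2$, and Stirling together with a Phragm\'en--Lindel\"of bound for $\Lambda(\cdot,\chi)$ to make the horizontal segments vanish).
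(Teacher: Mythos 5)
Your proof is correct and follows essentially the same route as the paper: represent $W$ as a Mellin integral, insert it into the Dirichlet series, shift the contour past $u=0$, and use the functional equation together with $T(\overline{\chi})=T(\chi)$ (pairing conjugate ideals) to fold the two contributions into a single sum. One small misstatement: the parenthetical attributing the factor $2$ to the constant in $\Gamma_{\mathbb{C}}(s)=2(2\pi)^{-s}\Gamma(s)$ is off, since an overall constant in the completed $L$-function has no effect on the functional equation or on the final identity; your own computation correctly shows that the $2$ arises from the root number being $+1$ combined with $T(\overline{\chi})=T(\chi)$, exactly as in the paper's derivation.
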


\begin{proof}
    Consider the completed $L$-function
    \[\Lambda(u, \chi) = \left(\frac{\sqrt{D}}{2\pi}\right)^u\Gamma(u)L(u, \chi),\]
    which is entire and satisfies the functional equation $\Lambda(1/2+u, \chi) = \Lambda(1/2-u, \chi)$. We write the central value $\Lambda(1/2, \chi)$ as the residue
    \[\Lambda(1/2, \chi) = \frac{1}{2\pi i}\int_{(2)} \Lambda(1/2+u, \chi)\, \frac{du}{u} - \frac{1}{2\pi i}\int_{(-2)} \Lambda(1/2+u, \chi)\, \frac{du}{u}.\]
    Using the functional equation and writing the $L$-function as the Dirichlet series, we obtain
    \[L(1/2, \chi) = 2 \sum_{\mathfrak{a} \ne 0} \frac{\chi(\mathfrak{a})}{\sqrt{N\mathfrak{a}}}W\left(\frac{2\pi N \mathfrak{a}}{\sqrt{D}}\right),\]
    where
    \[W(x) = \frac{1}{2\pi i}\int_{(2)} \frac{\Gamma(1/2+u)}{\Gamma(1/2)} x^{-u}  \, \frac{du}{u}.\]
    Lastly, write $\Gamma(1/2+u)$ as $\int_0^\infty t^\frac{1}{2} e^{-t} \, \frac{dt}{t}$ and swap the order of integration, obtaining
    \[W(x) = \frac{1}{\Gamma(1/2)} \int_0^\infty t^\frac{1}{2} e^{-t} \left(\frac{1}{2 \pi i}\int_{(2)} \left(\frac{t}{x}\right)^u \, \frac{du}{u} \right) \, \frac{dt}{t}.\]
    The inner integral is $0$ whenever $t < x$ and $1$ whenever $t > x$, which gives us
    \[W(x) = \frac{1}{\Gamma(1/2)} \int_x^\infty t^{\frac{1}{2}}e^{-t} \, \frac{dt}{t}.\]
    From this, it is clear that $W(0) = 1$ and that $W(x)$ is decreasing. For $x \ge 1$ we have
    \[W(x) \le \int_x^\infty e^{-t} \, dt = \frac{1}{e^x}\]
    concluding the proof.   
\end{proof}

We will also need the following bound:

\begin{lemma}\label{convex} With the notation of Lemma \ref{afe},
    \[ \sum_{\mathfrak{a} \ne 0} \frac{1}{\sqrt{N\mathfrak{a}}}W\left(\frac{2\pi N \mathfrak{a}}{\sqrt{D}}\right) \le (1+o(1)) D^{\frac{1}{4}}\log D.\]
\end{lemma}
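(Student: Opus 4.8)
The plan is to relate the sum $\sum_{\mathfrak{a}\ne 0} \frac{1}{\sqrt{N\mathfrak{a}}} W\!\left(\frac{2\pi N\mathfrak{a}}{\sqrt D}\right)$ to the Dedekind zeta function $\zeta_{\mathbb{Q}(\sqrt{-D})}(s)$ via a Mellin transform, and then bound $\zeta_{\mathbb{Q}(\sqrt{-D})}$ on the relevant line using the class number formula together with a convexity/subconvexity-free estimate. First I would write the sum as a contour integral: since $W(x) = \frac{1}{2\pi i}\int_{(2)} \frac{\Gamma(1/2+u)}{\Gamma(1/2)} x^{-u}\,\frac{du}{u}$ (as established in the proof of Lemma \ref{afe}), interchanging summation and integration gives
\[
\sum_{\mathfrak{a}\ne 0}\frac{1}{\sqrt{N\mathfrak{a}}}W\!\left(\frac{2\pi N\mathfrak{a}}{\sqrt D}\right)
= \frac{1}{2\pi i}\int_{(2)} \frac{\Gamma(1/2+u)}{\Gamma(1/2)}\left(\frac{\sqrt D}{2\pi}\right)^{u}\zeta_{\mathbb{Q}(\sqrt{-D})}\!\left(\tfrac12+u\right)\frac{du}{u},
\]
the Dirichlet series $\sum_{\mathfrak{a}} (N\mathfrak{a})^{-s}$ being exactly $\zeta_{\mathbb{Q}(\sqrt{-D})}(s)$ and convergent for $\Re s > 1$, i.e. $\Re u > 1/2$.

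Next I would move the contour to the left, picking up the pole of $\zeta_{\mathbb{Q}(\sqrt{-D})}(1/2+u)$ at $u = 1/2$ and the pole of $1/u$ at $u=0$. The residue at $u=1/2$ contributes $\frac{\Gamma(1)}{\Gamma(1/2)}\left(\frac{\sqrt D}{2\pi}\right)^{1/2}\cdot\frac{\mathrm{Res}_{s=1}\zeta_{\mathbb{Q}(\sqrt{-D})}(s)}{1/2}$, and by the analytic class number formula $\mathrm{Res}_{s=1}\zeta_{\mathbb{Q}(\sqrt{-D})}(s) = \frac{2\pi h_D}{w_D\sqrt D}$; since $h_D \ll D^{1/2}\log D$ (e.g. from $L(1,\chi_{-D}) \ll \log D$), this term is $\ll D^{1/4}\log D$. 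The residue at $u=0$ is $\zeta_{\mathbb{Q}(\sqrt{-D})}(1/2)$, which is $\ll D^{1/4}\log D$ by the convexity bound for the Dedekind zeta function of a quadratic field (convexity in the $D$-aspect gives exponent $1/4+\epsilon$; one can absorb the $\epsilon$ into the $1+o(1)$, or use the factorization $\zeta_{\mathbb{Q}(\sqrt{-D})} = \zeta\cdot L(\cdot,\chi_{-D})$ with the classical bound $L(1/2,\chi_{-D}) \ll D^{1/4}\log D$). Shifting to $\Re u = -1/4$ (say), the remaining integral is bounded using the rapid decay of $\Gamma(1/2+u)$ in vertical strips against the polynomial growth of $\zeta_{\mathbb{Q}(\sqrt{-D})}$ on that line, yielding a contribution of size $\ll D^{1/8+\epsilon}$, which is negligible.

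The main obstacle is making the bound clean enough to land exactly on $(1+o(1))D^{1/4}\log D$ rather than $D^{1/4+\epsilon}$ or $D^{1/4}(\log D)^{2}$. This requires being slightly careful: the class number formula term genuinely produces a constant times $D^{1/4}$ with no extra logarithm (since $L(1,\chi_{-D}) \ll \log D$ already gives $h_D \ll D^{1/2}\log D$, so $D^{1/4}(\log D)^{1/2}\cdot\frac{1}{D^{1/4}}$... wait — more precisely the residue term is $\asymp D^{-1/4} h_D \ll D^{1/4}\log D$), and the $\zeta_{\mathbb{Q}(\sqrt{-D})}(1/2)$ term is where the $D^{1/4}\log D$ shape really comes from via $|L(1/2,\chi_{-D})| \le (1+o(1)) D^{1/4}\log D$, a bound that follows from a Pólya–Vinogradov-type / trivial-estimate argument on the approximate functional equation for $L(s,\chi_{-D})$. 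Alternatively, and perhaps more simply, one can avoid contour shifting entirely: split the original sum at $N\mathfrak{a} \le \sqrt D$ and $N\mathfrak{a} > \sqrt D$, use $W \le 1$ on the first range with the lattice-point count $\#\{\mathfrak{a}: N\mathfrak{a}\le y\} \ll y + O(\sqrt{y})$-type bound (sharpened by the fact that ideals of norm $n$ number $\sum_{d\mid n}\chi_{-D}(d)$, so $\sum_{N\mathfrak{a}\le y} 1 \ll y\log D$ is too lossy — better to use $\sum_{N\mathfrak{a}\le y}(N\mathfrak{a})^{-1/2} \ll \sqrt y\,L(1,\chi_{-D}) \ll \sqrt y\log D$), and use $W(x)\le e^{-x}$ on the tail to get a negligible contribution. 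This elementary route directly gives $\sum \ll D^{1/4}\log D$ and I would present it as the primary argument, with the Mellin/zeta approach as a remark; the $(1+o(1))$ is then harvested from the best available effective bound on $L(1,\chi_{-D})$.
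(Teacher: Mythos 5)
Your preferred elementary route is essentially the paper's argument. The paper likewise splits the sum at $N\mathfrak{a}\le\sqrt D/(2\pi)$, uses $W\le 1$ on the head and the decay $W(x)\le 1/e^x$ on the tail, writes the ideal count as $\lambda(n)=\sum_{d\mid n}\chi_{-D}(d)$, and then simply bounds $\lambda(n)\le d(n)$ so that partial summation with standard, $D$-independent divisor estimates gives $(1+o(1))D^{1/4}\log D$. Your variant retains the character and claims $\sum_{N\mathfrak{a}\le y}(N\mathfrak{a})^{-1/2}\ll\sqrt y\,L(1,\chi_{-D})$; strictly speaking, a hyperbola-style expansion produces the truncated sum $\sum_{d\le y}\chi_{-D}(d)/d$ rather than $L(1,\chi_{-D})$, and the cleanest uniform bound on that partial sum is the trivial $\le\log y+O(1)$ — at which point you have in effect fallen back to the paper's divisor bound, so the two routes coincide. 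The Mellin/contour alternative you sketch is a genuinely different organization, but it leans on the convexity bound $|L(1/2,\chi_{-D})|\ll D^{1/4}\log D$ to control the residue at $u=0$, and that convexity bound is itself proved by exactly the same head/tail split of an approximate functional equation; so it re-derives the lemma with more machinery and buys nothing here. The precise constant $1+o(1)$ in the statement is not actually important downstream (it only affects the explicit constant $3$ in the constraint on $|\mathcal M|$), so your worry about landing exactly on $(1+o(1))$ rather than $O(D^{1/4}\log D)$ is moot, though the divisor-function route does deliver it cleanly.
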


\begin{proof}
    Using Lemma \ref{afe},
\[\sum_{\mathfrak{a} \ne 0} \frac{1}{\sqrt{N\mathfrak{a}}}W\left(\frac{2\pi N \mathfrak{a}}{\sqrt{D}}\right) \le \sum_{N\mathfrak{a} \le \frac{\sqrt{D}}{2\pi}} \frac{1}{\sqrt{N\mathfrak{a}}} + \sum_{N\mathfrak{a} > \frac{\sqrt{D}}{2\pi}} \frac{1}{\sqrt{N\mathfrak{a}}} \frac{\sqrt{D}}{2\pi N\mathfrak{a}}.\]
Write $\lambda(n)$ for the number of integral ideals of norm $n$ in $\mathbb{Q}(\sqrt{-D})$. As the Dedekind $\zeta$-function of this field factors as $\zeta(s)L(s, \chi_{-D})$ where $\chi_{-D}$ is the quadratic character $\left(\frac{-D}{\bullet}\right)$, we have
\[\lambda(n) = \sum_{d | n} \chi_{-D}(d) \le d(n).\]
It remains to show that
\[\sum_{n \le \frac{\sqrt{D}}{{2\pi}}} \frac{d(n)}{\sqrt{n}} + \frac{\sqrt{D}}{2\pi} \sum_{n >\frac{\sqrt{D}}{{2\pi}}} \frac{d(n)}{n^{3/2}} \le (1+o(1)) D^\frac{1}{4}\log D,\]
and this follows by partial summation and standard estimates of the divisor function.
\end{proof}

Lastly, we will need an elementary sieve result:

\begin{lemma}\label{crivo} Let $p$ be a prime and $X \ge 1$. Then

\[ \sum_{D \asymp X} \left( \frac{-D}{p} \right) = O(p\sqrt{X}),\]
where the sum is taken over all fundamental discriminants $-D \in [-2X, -X]$.    
\end{lemma}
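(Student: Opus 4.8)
The plan is to reduce the sum over fundamental discriminants to sums of Legendre symbols over squarefree integers in fixed residue classes, strip the squarefreeness by M\"obius inversion, and then exploit the exact cancellation of a complete character sum modulo $p$. Assume first that $p$ is odd. A negative integer $-D$ is a fundamental discriminant exactly when either $D$ is squarefree with $D \equiv 3 \pmod 4$, or $D = 4m$ with $m$ squarefree and $m \equiv 1,2 \pmod 4$. Splitting the sum according to these two shapes and using $\left(\frac{-4m}{p}\right) = \left(\frac{-m}{p}\right)$ (valid for odd $p$ since $\left(\frac{4}{p}\right)=1$), it suffices to bound by $O(p\sqrt{X})$ each sum of the form $\sum_n \left(\frac{\pm n}{p}\right)$ taken over squarefree $n \in [Y,2Y]$, with $Y \asymp X$, lying in a fixed residue class $n \equiv a \pmod 4$, $a \in \{1,2,3\}$.

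To remove the squarefreeness, insert $\mu^2(n) = \sum_{e^2 \mid n}\mu(e)$ and interchange the order of summation, writing $n = e^2\ell$. Since $4 \mid e^2$ whenever $e$ is even, the congruence $n \equiv a \pmod 4$ forces the inner sum to be empty for even $e$; and since $\left(\frac{\pm e^2\ell}{p}\right) = 0$ whenever $p \mid e$, those $e$ also drop out. For each surviving $e$ — which satisfies $e \le \sqrt{2Y}$, so there are $O(\sqrt{X})$ of them — we have $e$ odd and $p \nmid e$, hence $\left(\frac{\pm e^2\ell}{p}\right) = \left(\frac{\pm\ell}{p}\right)$, and the inner sum equals $\left(\frac{\pm 1}{p}\right)\sum_\ell \left(\frac{\ell}{p}\right)$ over $\ell \in [Y/e^2, 2Y/e^2]$ with $\ell \equiv a \pmod 4$.

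Finally, set $\ell = a + 4s$; since $\gcd(4,p)=1$, the map $s \mapsto \left(\frac{a+4s}{p}\right)$ is periodic of period $p$ with $\sum_{s \bmod p}\left(\frac{a+4s}{p}\right) = \sum_{u \bmod p}\left(\frac{u}{p}\right) = 0$, so the sum of this symbol over any interval of $s$'s is $O(p)$, arising only from the incomplete final period. Summing the bound $O(p)$ over the $O(\sqrt{X})$ admissible $e$ gives $O(p\sqrt{X})$, and combining the two shapes proves the lemma. The only point requiring care is to use this exact cancellation over full periods rather than an estimate like P\'olya--Vinogradov; the latter would in fact yield the sharper bound $O(\sqrt{pX}\log p)$, but the weaker form stated here is all that is needed later. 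For the remaining case $p = 2$, the Kronecker symbol $\left(\frac{-D}{2}\right)$ vanishes on the shape $D=4m$ and equals $\pm 1$ according to $D \bmod 8$ on the other shape, so the same M\"obius argument reduces the sum to a difference of counts of squarefree integers in two residue classes modulo $8$, each of which is $\frac{X}{q}\sum_e \mu(e)/e^2 + O(\sqrt{X})$; the main terms cancel, leaving $O(\sqrt{X})$, which matches $O(p\sqrt{X})$ with $p=2$.
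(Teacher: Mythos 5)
Your proof is correct and follows essentially the same route as the paper: split fundamental discriminants by shape (squarefree versus four times squarefree), remove the squarefreeness by M\"obius/inclusion--exclusion over $O(\sqrt{X})$ square divisors, and use the periodicity of the Kronecker symbol modulo $p$ (combined with the residue condition mod $4$) to bound each resulting incomplete character sum by $O(p)$. Your write-up is a bit more explicit about the M\"obius step and the $p=2$ case, but the substance and the source of the $O(p\sqrt{X})$ bound are identical to the paper's argument.
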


\begin{proof}
    Given a fundamental discriminant $-D$ with squarefree part $-d$, we have
    \[-D = \begin{cases} -d & \text{if $-d \equiv 1 \pmod{4}$,} \\
    -4d & \text{if $-d \equiv 2,3 \pmod{4}$.}
    \end{cases}
    \]
We show that the sum above over squarefree numbers $-d \equiv 1 \pmod{4}$ in $[-2X, -X]$ is in fact $O(p\sqrt{X})$. Notice by periodicity that
\[\sum_{n = -2X}^{-X} \left( \frac{n}{p}\right) \chi_4^a(n)\]
is at most $4p$, where $\chi_4$ is the non-trivial character modulo $4$ and $a = 0$ or $1$. We now sieve out values that are not squarefree. By inclusion-exclusion on primes, the sum
\[\sum_{\substack{n = -2X \\ n \text{ squarefree}}}^{-X} \left( \frac{n}{p}\right) \chi_4^a(n)\]
is less than $8p\sqrt{X}$. Adding these for $a = 0$ and $1$ gives the result. The other case is handled similarly. \end{proof}


\section{The resonance method}\label{res}

\par In this section we use our family of class group $L$-functions attached to $\mathbb{Q}(\sqrt{-D})$ to describe in detail how Bondarenko and Seip \cite{bondarenko2017large} adapted the resonance method of Soundararajan \cite{soundararajan2007extreme} to produce large values. The positivity of the coefficients of the underlying Dedekind $\zeta$-function $\sum_\mathfrak{a} (N\mathfrak{a})^{-s}$ and the positivity of the right-hand side of the orthogonality relation between characters of the class group \hbox{of $\mathbb{Q}(\sqrt{-D})$} ensure that their construction works in our family.
\subsection{Setup} Given a fundamental discriminant $-D$, let $\mathcal{C}$ be the class group of $\mathbb{Q}(\sqrt{-D})$ and $h_D$ its order. For any choice of complex numbers $R_\chi$, let
\[V = \sum_{\chi \ne \chi_0} L(1/2, \chi) |R_{\chi}|^2 \text{ and } W = \sum_{\chi \ne \chi_0} |R_\chi|^2\]
where $\chi_0$ is the trivial character. Notice that $L(1/2, \chi)$ is a real number, as can be seen by pairing up conjugate ideals in Lemma \ref{afe}. Then 
\begin{equation} \label{maxL}
\max_{\chi \in \widehat{\mathcal{C}}} L(1/2, \chi) \ge V/W\end{equation} and this ratio is maximized when $R_\chi$ is the indicator of the largest $L$-value. The resonance method consists of choosing $R_\chi$ as to produce a large value for $V/W$.

\par In order to compute a concrete value, we set
\[R_\chi = \sum_{\mathcal{A} \in \mathcal{C}} \chi(\mathcal{A})r(\mathcal{A})\]
with
\[r(\mathcal{A}) = \sqrt{\sum_{\substack{\mathfrak{a} \in \mathcal{M} \\ [\mathfrak{a}] = \mathcal{A} }} f^2(\mathfrak{a})}\]
where $f$ is a real-valued function to be specified later, along with the set $\mathcal{M}$. Notice that \begin{equation} \label{W0} W \le h_D \sum_{\mathcal{A} \in \mathcal{C}} r^2(\mathcal{A}) =: W_0\end{equation}
by adding $|R_{\chi_0}|^2$ to $W$ and using orthogonality. To find a lower bound for $V$ we use Lemma \ref{afe} to write
\begin{equation}\label{V-V0}V = V_0 - E_0\end{equation}
where
\begin{equation*} V_0 = \sum_{\chi \in \widehat{\mathcal{C}}} \Bigg( 2 \sum_{\mathfrak{a} \ne 0} \frac{\chi(\mathfrak{a})}{\sqrt{N\mathfrak{a}}}W\left(\frac{2\pi N \mathfrak{a}}{\sqrt{D}}\right) \Bigg) |R_\chi|^2 \text{ and }
    E_0 = \Bigg(2 \sum_{\mathfrak{a} \ne 0} \frac{1}{\sqrt{N\mathfrak{a}}}W\left(\frac{2\pi N \mathfrak{a}}{\sqrt{D}}\right) \Bigg) |R_{\chi_0}|^2.
\end{equation*} 
\par Notice that
\begin{gather*}
V_0 = \sum_{\chi \in \widehat{\mathcal{C}}} \Bigg( 2 \sum_{\mathfrak{a} \ne 0} \frac{\chi(\mathfrak{a})}{\sqrt{N\mathfrak{a}}}W\left(\frac{2\pi N \mathfrak{a}}{\sqrt{D}}\right) \Bigg) \Bigg( \sum_{\mathcal{A}, \mathcal{B} \in \mathcal{C}} \chi(\mathcal{A})\overline{\chi}(\mathcal{B}) r(\mathcal{A})\overline{r}(\mathcal{B}) \Bigg) \\ 
= 2h_D \sum_{\mathfrak{a} \ne 0} \frac{1}{\sqrt{N\mathfrak{a}}} W\left(\frac{2\pi N\mathfrak{a}}{\sqrt{D}}\right)\Bigg(\sum_{\substack{ \mathcal{A}, \mathcal{B} \in \mathcal{C} \\ \mathcal{A}[\mathfrak{a}] = \mathcal{B}}} r(\mathcal{A})\overline{r}(\mathcal{B}) \Bigg).
\end{gather*}
Since $r$ is real, we drop the conjugate. For two classes $\mathcal{A}, \mathcal{B} \in \mathcal{C}$ with $\mathcal{A} [\mathfrak{a}] = \mathcal{B}$, 
\[r(\mathcal{A})r(\mathcal{B}) \ge  \sum_{\substack{ \mathfrak{m}, \mathfrak{n} \in \mathcal{M}, \text{ }\mathfrak{m}\mathfrak{a} = \mathfrak{n} \\ [\mathfrak{m}] = \mathcal{A}, [\mathfrak{n}] = \mathcal{B}}} f(\mathfrak{m})f(\mathfrak{n}),\]
by applying Cauchy-Schwarz pairing up ideals $\mathfrak{m}, \mathfrak{n}$ with $\mathfrak{m}\mathfrak{a} = \mathfrak{n}$. Therefore 
\begin{equation*}
V_0 \ge 2h_D \sum_{\substack{\mathfrak{m}, \mathfrak{n} \in \mathcal{M} \\ \mathfrak{m} | \mathfrak{n}}} \frac{f(\mathfrak{m})f(\mathfrak{n})}{\sqrt{N(\mathfrak{n}/\mathfrak{m})}}W\left(\frac{2\pi N(\mathfrak{n}/\mathfrak{m})}{\sqrt{D}}\right).
\end{equation*}
From this we remark that $V_0 \ge 2 W(2\pi /\sqrt{D}) W_0 \ge W_0$ for $D$ large enough.
\par By Lemma \ref{afe},
\begin{equation}\label{V0}V_0 \gg h_D \sum_{\substack{\mathfrak{m}, \mathfrak{n} \in \mathcal{M}, \text{ }\mathfrak{m} | \mathfrak{n} \\ N(\mathfrak{n}/\mathfrak{m}) < \sqrt{D}}} \frac{f(\mathfrak{m})f(\mathfrak{n})}{\sqrt{N(\mathfrak{n}/\mathfrak{m})}}.\end{equation}
In order to use (\ref{V0}) as a lower bound for $V$, we need to ensure that \begin{equation}\label{TCC}
E_0 \le c V_0    
\end{equation}
for some constant $c < 1$. When ($\ref{TCC}$) is fulfilled, (\ref{maxL}), (\ref{W0}), (\ref{V-V0}), (\ref{V0}) yield
\begin{equation}\label{maximizacao} \max_{\chi \in \widehat{\mathcal{C}}}L(1/2, \chi) \gg \ddfrac{1}{\sum_{\mathfrak{m} \in \mathcal{M}} f^2(\mathfrak{m})} \sum_{\substack{\mathfrak{m}, \mathfrak{n} \in \mathcal{M}, \text{ }\mathfrak{m} | \mathfrak{n} \\ N(\mathfrak{n}/\mathfrak{m}) < \sqrt{D}}} \frac{f(\mathfrak{m})f(\mathfrak{n})}{\sqrt{N(\mathfrak{n}/\mathfrak{m})}}.\end{equation}
We refer to (\ref{TCC}) as the \begin{it}trivial character constraint\end{it}.

\par In practice, we do not expect to get any power of $D$ as a lower bound in (\ref{maximizacao}), and the orders of magnitude of $W_0$ and $V_0$ are indistinguishable when upper bounding $E_0$. For this reason and using the observation that $W_0 \le V_0$, we fulfill the inequality $E_0 \le c W_0$ instead of (\ref{TCC}). Using Lemma \ref{convex},
\[E_0 \le (2+o(1)) D^{\frac{1}{4}}\log D \cdot \Bigg(\sum_{\mathcal{A} \in \mathcal{C}} r(\mathcal{A})\Bigg)^2\]
and by Cauchy-Schwarz,
\[\Bigg(\sum_{\mathcal{A} \in \mathcal{C}} r(\mathcal{A})\Bigg)^2  \le |\{\mathcal{A} \in \mathcal{C}: \mathcal{M} \cap \mathcal{A} \ne \emptyset \}|  \cdot \sum_{\mathcal{A} \in \mathcal{C}} r^2(\mathcal{A}) .\]
The set on the right has size at most $|\mathcal{M}|$, so by (\ref{W0}), as long as
\begin{equation}\label{sizebound} |\mathcal{M}| \le \frac{h_D}{3 D^{\frac{1}{4}}\log D}
\end{equation}
the trivial character constraint (\ref{TCC}) is fulfilled and (\ref{maximizacao}) holds.

\par Before proceeding, we take a moment to note the constraints of this method. As it will be clear later, our maximized value is a function of $|\mathcal{M}|$, and going beyond powers of $D$ for the size of this set would produce better results. However, to accomplish this we need another way of fulfilling the trivial character constraint (\ref{TCC}).

\par One possibility would be to choose $\mathcal{M}$ supported only in a few classes $\mathcal{A}$, having in mind the Cauchy-Schwarz inequality above. In this case, however, $f$ could \mbox{hardly be multiplicative} -- an important feature of the method. The only exception to this is when $\mathcal{M}$ is supported in a small subgroup of $\mathcal{C}$, but then equidistribution results seem to work against our goal. Another possibility would be to avoid Cauchy-Schwarz and instead choose a function $r$ for which we have control over the sums involved. However, this seems impractical due to the multiplicativity of 
$f$, making the sums over a single class that define $r$ hard to control.

\par Lastly we remark that the order of magnitude attained in Theorem \ref{main} is likely optimal using the current methods. More precisely, assuming the results about GCD sums from \cite{bondarenko2014gcd} for integral ideals and the upper bound (\ref{sizebound}) on the size of $\mathcal{M}$, the maximum value of the right-hand side of (\ref{maximizacao}) is already bounded above by the intended order of magnitude. 

\subsection{Parameters} We now choose the parameters to maximize (\ref{maximizacao}). We follow the original work \cite{bondarenko2017large} closely, adapting when needed with the number field features.

Let $M$ be a positive integer and $\gamma < 1/2$ be a constant. In practice, $M$ will serve as the upper bound for the size of $\mathcal{M}$. For each integral prime $\mathfrak{p}$ in $\mathbb{Q}(\sqrt{-D})$ write $p$ for the integer prime it lies above. Now $f$ will be a multiplicative function supported on a set of squarefree ideals generated by the prime ideals $\mathfrak{p}$ above primes
\[e \log M \log_2 M < p \le \exp(\lfloor(\log_2 M)^\gamma\rfloor) \log M \log_2M\]
and for such prime ideals we set
\[ f(\mathfrak{p}) = \sqrt{\frac{\log M \log_2 M}{\log_3 M}} \cdot \frac{1}{\sqrt{p}(\log p - \log_2 M - \log_3 M)}.\]
\par Having in mind that $M$ will serve as an upper bound for $|\mathcal{M}|$, we cannot take $\mathcal{M}$ to be the set of all such squarefree numbers (this has size approximately $M^{\log 2 \exp((\log_2 M)^\gamma)})$. In order to make the size of $\mathcal{M}$ less than $M$, we require an ideal in $\mathcal{M}$ to have less than $a\log M/k^2 \log_3 M$ prime ideal factors above primes in $(e^k \log M \log_2 M, e^{k+1} \log M \log_2 M]$ for each $k = 1, 2, \dots, \lfloor (\log_2 M)^\gamma \rfloor-1$, where $a$ is a fixed real number between $2$ and $1/\gamma$. 

\par Denote by $\mathbb{P}_k$ the set of prime ideals over the interval corresponding to $k$ and denote by $\mathbb{P}$ the union of all $\mathbb{P}_k$'s. The goal of this section is to prove the following result:

\begin{prop}\label{proposicao} With the definitions above, the set $\mathcal{M}$ has size at most $M$. Moreover, when $M \le D$,
\begin{multline*}\ddfrac{1}{\sum_{\mathfrak{m} \in \mathcal{M}} f^2(\mathfrak{m})} \sum_{\substack{\mathfrak{m}, \mathfrak{n} \in \mathcal{M}, \text{ }\mathfrak{m} | \mathfrak{n} \\ N(\mathfrak{n}/\mathfrak{m}) < \sqrt{D}}} \frac{f(\mathfrak{m})f(\mathfrak{n})}{\sqrt{N(\mathfrak{n}/\mathfrak{m})}} \ge \\ \exp \Bigg((1 + o(1))\sqrt{\frac{\log M \log_2 M}{\log_3 M}} \sum_{\mathfrak{p} \in \mathbb{P}} \frac{1}{\sqrt{N\mathfrak{p}}} \frac{1}{\sqrt{p}(\log p - \log_2 M - \log_3 M)} + o(1)\Bigg).\end{multline*}
\end{prop}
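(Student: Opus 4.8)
The plan is to follow the Bondarenko--Seip argument, tracking the two ingredients separately: the cardinality bound $|\mathcal{M}| \le M$ and the lower bound on the GCD-type ratio. For the cardinality, I would estimate $\log|\mathcal{M}|$ by summing, over each block $k = 1, \dots, \lfloor(\log_2 M)^\gamma\rfloor - 1$, the logarithm of the number of ways to pick fewer than $a\log M/(k^2 \log_3 M)$ prime ideals from $\mathbb{P}_k$. Using $|\mathbb{P}_k| \ll e^k \log M$ (prime number theorem, noting each rational prime contributes at most two prime ideals) and the crude bound $\binom{n}{\le t} \le (en/t)^t$, the $k$-th block contributes at most $\frac{a\log M}{k^2\log_3 M}\bigl(\log(e^k\log M) + O(\log_3 M)\bigr) \approx \frac{a\log M}{k^2\log_3 M}(\log_2 M + k)$; summing over $k$ and using $\sum 1/k^2 < \pi^2/6$ and $\sum 1/k < \log_2 M$ gives $\log|\mathcal{M}| \le (1+o(1))\frac{a\pi^2}{6}\cdot\frac{\log M}{\log_3 M}\log_2 M$... wait, one must be more careful: the dominant term is $\frac{\log_2 M}{k^2}$ summed against $\frac{a\log M}{\log_3 M}$, which is $o(\log M)$ since $\log_2 M / \log_3 M = o(\log M / \log_2 M)$ — actually the point is simply that the total is $o(\log M)$ so $|\mathcal{M}| \le M$ comfortably. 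I would verify the constant $a < 1/\gamma$ is exactly what makes this work and record the computation.

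For the main inequality, the first step is to drop the divisibility-with-small-norm-quotient condition down to the diagonal-plus-one-extra-prime structure: since $f$ is multiplicative and supported on squarefree ideals, restrict the sum over pairs $\mathfrak{m} \mid \mathfrak{n}$ to those with $\mathfrak{n} = \mathfrak{m}\mathfrak{p}$ for a single prime $\mathfrak{p} \in \mathbb{P}$, $\mathfrak{p} \nmid \mathfrak{m}$. Then $N(\mathfrak{n}/\mathfrak{m}) = N\mathfrak{p} \le \exp(\lfloor(\log_2 M)^\gamma\rfloor)^2 \log^2 M \log_2^2 M < \sqrt{D}$ once $M \le D$ and $X$ (hence $D$) is large, so the norm constraint is automatically satisfied. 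This reduces the ratio to
\[
\frac{\sum_{\mathfrak{p}\in\mathbb{P}} \frac{f(\mathfrak{p})}{\sqrt{N\mathfrak{p}}} \sum_{\substack{\mathfrak{m}\in\mathcal{M},\ \mathfrak{p}\nmid\mathfrak{m},\ \mathfrak{m}\mathfrak{p}\in\mathcal{M}}} f(\mathfrak{m})^2}{\sum_{\mathfrak{m}\in\mathcal{M}} f(\mathfrak{m})^2}.
\]
The inner sum is a restriction of the full $\sum f(\mathfrak{m})^2$, so the ratio is at least $\min_{\mathfrak{p}} \frac{f(\mathfrak{p})}{\sqrt{N\mathfrak{p}}} \cdot \frac{(\text{restricted sum})}{(\text{full sum})}$; the heart of the matter is showing the restricted sum is $(1-o(1))$ times the full sum, i.e. that almost all $\mathfrak{m}\in\mathcal{M}$ remain in $\mathcal{M}$ after multiplying by $\mathfrak{p}$. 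This is exactly the point where the slack in the block constraints ($a<1/\gamma$, and the $M$-vs-$M^{1+o(1)}$ gap) is consumed: a typical $\mathfrak{m}\in\mathcal{M}$, weighted by $f(\mathfrak{m})^2$, has strictly fewer than the allowed number of primes in each block, with room to spare, so adjoining one more prime keeps it admissible. Quantifying this requires understanding the $f^2$-weighted distribution of the number of prime factors in each block — a second-moment / concentration estimate analogous to the one in \cite{bondarenko2017large}.

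Once that is in place, I would combine: the ratio is $\ge (1-o(1)) \sum_{\mathfrak{p}\in\mathbb{P}} \frac{f(\mathfrak{p})}{\sqrt{N\mathfrak{p}}} \cdot w(\mathfrak{p})$ where $w(\mathfrak{p})$ is the relative weight of $\mathfrak{m}$'s that can absorb $\mathfrak{p}$, and then exponentiate. Actually the cleaner route, matching the target form, is: write $\sum f(\mathfrak{m})^2 = \prod$-like / use that $\log$ of the full sum over $\mathcal{M}$ versus the diagonal $\sum_{\mathfrak{m}} f(\mathfrak{m})^2 \prod(1 + \cdots)$ gives the exponential; the standard manipulation turns $\frac{1}{\sum f(\mathfrak{m})^2}\sum_{\mathfrak{m}\mathfrak{p}} f(\mathfrak{m})f(\mathfrak{m}\mathfrak{p})/\sqrt{N\mathfrak{p}}$ into $\exp\bigl((1+o(1))\sum_{\mathfrak{p}\in\mathbb{P}} \frac{f(\mathfrak{p})}{\sqrt{N\mathfrak{p}}}\bigr)$ after substituting $f(\mathfrak{p}) = \sqrt{\log M\log_2 M/\log_3 M}\cdot\frac{1}{\sqrt p(\log p - \log_2 M - \log_3 M)}$, which is precisely the claimed bound. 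The $+o(1)$ inside the exponential absorbs the $2W(2\pi/\sqrt D)$ factor and the fact that the empty ideal $\mathfrak{m} = (1)$ contributes $1$ to the full sum.

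\textbf{Main obstacle.} The crux is the concentration estimate showing that the $f^2$-weighted mass of $\mathcal{M}$ is essentially unchanged by the block-constraint truncation and stays so after adjoining one prime — i.e. that $\mathcal{M}$ captures almost all of the $f^2$-mass of the full (unrestricted squarefree) support, and is "one-prime-robust." This is where the delicate choice of the window $(e\log M\log_2 M, \exp(\lfloor(\log_2 M)^\gamma\rfloor)\log M\log_2 M]$, the per-block thresholds $a\log M/(k^2\log_3 M)$, and the constant $a\in(2,1/\gamma)$ all have to conspire; adapting it to prime ideals (rather than rational primes) is mostly bookkeeping, since $\sum_{\mathfrak{p}} f(\mathfrak{p})^2/\text{(stuff)}$ differs from the rational-prime version only by the split/inert/ramified factor $\sum_{p}\lambda(p)(\cdots)$, but one must check no divergence is introduced and that the relevant variances are still $o$ of the relevant means squared.
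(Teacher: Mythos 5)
Your proposal has a genuine gap in the main inequality, and a computational slip in the cardinality bound.

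\textbf{Main inequality.} Your first reduction — restricting the double sum to pairs $\mathfrak{n}=\mathfrak{m}\mathfrak{p}$ with $\mathfrak{p}$ a single new prime — throws away almost all of the mass and cannot produce the exponential. After that restriction the ratio is, up to the $(1-o(1))$ concentration factor you describe, literally the \emph{sum} $\sum_{\mathfrak{p}\in\mathbb{P}} f(\mathfrak{p})/\sqrt{N\mathfrak{p}}$, not $\exp$ of it. The assertion near the end that ``the standard manipulation turns $\frac{1}{\sum f(\mathfrak{m})^2}\sum_{\mathfrak{m}\mathfrak{p}} f(\mathfrak{m})f(\mathfrak{m}\mathfrak{p})/\sqrt{N\mathfrak{p}}$ into $\exp((1+o(1))\sum_{\mathfrak{p}} f(\mathfrak{p})/\sqrt{N\mathfrak{p}})$'' is false: the left side is $O(\sum_\mathfrak{p} f(\mathfrak{p})/\sqrt{N\mathfrak{p}})$, a polynomially bounded quantity. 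The exponential comes precisely from \emph{not} restricting: the paper keeps all $\mathfrak{m}\mid\mathfrak{n}$ with $\mathfrak{n}/\mathfrak{m}$ an arbitrary squarefree ideal supported on $\mathbb{P}$, so that the double sum factorizes as an Euler product $\prod_\mathfrak{p}(1+f(\mathfrak{p})^2+f(\mathfrak{p})/\sqrt{N\mathfrak{p}})$. Dividing by $\prod_\mathfrak{p}(1+f(\mathfrak{p})^2)$ gives $\prod_\mathfrak{p}(1+\frac{f(\mathfrak{p})}{\sqrt{N\mathfrak{p}}(1+f(\mathfrak{p})^2)})$, and taking logs yields the exponential of the claimed sum. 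The actual reductions are (i) drop the $N(\mathfrak{n}/\mathfrak{m})<\sqrt{D}$ cutoff (cheap, since the missing contribution is $O(D^{-1/8+o(1)})$), and then (ii) replace $\mathcal{M}$ by the full support of $f$. Step (ii) is the concentration input you correctly flag as the crux, but it is used to \emph{enlarge} $\mathcal{M}$ to a set that admits the Euler-product factorization, not to justify a one-prime restriction. Put differently: a large value of $\exp(S)$ cannot be reached from a term-by-term expansion that only captures $S$ itself.

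\textbf{Cardinality bound.} Your per-block estimate $\log(en/t)\approx\log_2 M + k$ is off: with $n\asymp e^k\log M$ and $t\asymp a\log M/(k^2\log_3 M)$ one has $n/t\asymp e^k k^2\log_3 M$, so $\log(n/t)\asymp k + 2\log k + \log_4 M$; the factor $\log_2 M$ does not appear. Summing $\frac{a\log M}{k^2\log_3 M}(k + O(\log k + \log_4 M))$ over $k\le(\log_2 M)^\gamma$ gives $(a\gamma+o(1))\log M$, not $o(\log M)$; the bound $|\mathcal{M}|\le M$ thus genuinely needs $a\gamma<1$ and holds only with that constant, not ``comfortably.''
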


\par We will need three auxiliary results to establish the inequality. The first removes the truncation that bounds the norms of ideal ratios, the second shows that $\mathcal{M}$ can be replaced by the entire support of $f$, and the third computes the expression using the entire support of $f$. We start with

\begin{lemma}\label{truncation}
    When $M \le D$, we have
    \[\ddfrac{1}{\sum_{\mathfrak{m} \in \mathcal{M}} f^2(\mathfrak{m})} \sum_{\substack{\mathfrak{m}, \mathfrak{n} \in \mathcal{M}, \text{ }\mathfrak{m} | \mathfrak{n} \\ N(\mathfrak{n}/\mathfrak{m}) > \sqrt{D}}} \frac{f(\mathfrak{m})f(\mathfrak{n})}{\sqrt{N(\mathfrak{n}/\mathfrak{m})}} = o(1).\]
\end{lemma}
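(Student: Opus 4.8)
The plan is to bound the "large-ratio" sum on the left by extending it to the full support of $f$ and then showing that this extended sum, divided by $\sum_{\mathfrak m\in\mathcal M} f^2(\mathfrak m)$, is $o(1)$. First I would drop the constraint $\mathfrak m,\mathfrak n\in\mathcal M$ in the numerator, replacing it by $\mathfrak m,\mathfrak n$ ranging over the entire set of squarefree ideals supported on the primes $\mathfrak p$ with $e\log M\log_2 M<p\le \exp(\lfloor(\log_2 M)^\gamma\rfloor)\log M\log_2 M$; since $f\ge 0$ this only increases the numerator. Write $\mathfrak k=\mathfrak n/\mathfrak m$, so that the numerator becomes $\sum_{\mathfrak k:\,N\mathfrak k>\sqrt D}\frac{1}{\sqrt{N\mathfrak k}}\sum_{\mathfrak m}f(\mathfrak m)f(\mathfrak m\mathfrak k)$ over $\mathfrak m$ coprime to $\mathfrak k$ (both supported on the allowed primes). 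Using multiplicativity of $f$ and the elementary inequality $f(\mathfrak p)\le 1$ (which holds because $p>e\log M\log_2 M$ forces $\sqrt p(\log p-\log_2 M-\log_3 M)\ge \sqrt{\log M\log_2 M/\log_3 M}$ for $M$ large — this is exactly the normalization), each term $f(\mathfrak m)f(\mathfrak m\mathfrak k)\le f(\mathfrak m)^2 f(\mathfrak k)$, so $\sum_{\mathfrak m}f(\mathfrak m)f(\mathfrak m\mathfrak k)\le f(\mathfrak k)\prod_{\mathfrak p}(1+f(\mathfrak p)^2)$, and the product factors out as a constant multiple of $\sum_{\mathfrak m\in\mathcal M}f^2(\mathfrak m)$ up to acceptable factors (this is the standard GCD-sum bookkeeping from \cite{bondarenko2017large}).

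After this reduction it remains to show $\sum_{N\mathfrak k>\sqrt D}\frac{f(\mathfrak k)}{\sqrt{N\mathfrak k}}=o(1)$, where $\mathfrak k$ runs over squarefree ideals built from the allowed primes. Since $N\mathfrak k$ is a product of (one or two copies of) primes each exceeding $e\log M\log_2 M$, and since $M\le D$, having $N\mathfrak k>\sqrt D\ge\sqrt M$ forces $\mathfrak k$ to have at least $\asymp \log M/\log_2 M$ prime factors. I would bound this tail by inserting, for any $\sigma>0$, the factor $(N\mathfrak k/\sqrt D)^{\sigma}\ge 1$, giving
\[
\sum_{N\mathfrak k>\sqrt D}\frac{f(\mathfrak k)}{\sqrt{N\mathfrak k}}\le D^{-\sigma/2}\prod_{\mathfrak p}\bigl(1+f(\mathfrak p)(N\mathfrak p)^{\sigma-1/2}\bigr)\le D^{-\sigma/2}\exp\Bigl(\sum_{\mathfrak p}f(\mathfrak p)(N\mathfrak p)^{\sigma-1/2}\Bigr).
\]
Each prime ideal has $N\mathfrak p\in\{p,p^2\}$, so $N\mathfrak p^{\sigma-1/2}\le p^{\sigma-1/2}\cdot p^{\sigma-1/2}$ in the worst case; bounding $f(\mathfrak p)\le \sqrt{\log M\log_2 M/\log_3 M}\cdot p^{-1/2}/(\log p-\log_2 M-\log_3 M)$ and summing over $p$ in the dyadic-type range $(e\log M\log_2 M,\ \exp(\lfloor(\log_2 M)^\gamma\rfloor)\log M\log_2 M]$ by partial summation with the prime number theorem, one finds the exponent is at most $D^{o(1)}$ uniformly for a suitable small $\sigma=\sigma(M)$, say $\sigma\asymp 1/\log_2 M$, so that $D^{-\sigma/2}$ still beats it. Concretely, choosing $\sigma$ a small constant times $1/\log(\exp(\lfloor(\log_2 M)^\gamma\rfloor)\log M\log_2 M)\asymp (\log_2 M)^{-\gamma}$ makes $(N\mathfrak p)^\sigma\ll 1$ on the whole range, so the exponent is $O\bigl(\sum_{\mathfrak p}f(\mathfrak p)(N\mathfrak p)^{-1/2}\bigr)=O\bigl(\sqrt{\log M\log_2 M/\log_3 M}\bigr)$, while $D^{-\sigma/2}\le M^{-\sigma/2}=\exp(-c\,\log M\,(\log_2 M)^{-\gamma})$, and since $\gamma<1/2$ the latter dominates, giving $o(1)$.

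The main obstacle is keeping the two competing contributions genuinely balanced: the gain $D^{-\sigma/2}$ must be taken with $\sigma$ small enough that $(N\mathfrak p)^\sigma$ does not blow up the Euler product over primes as large as $\exp((\log_2 M)^\gamma)\log M\log_2 M$, yet large enough that $M^{-\sigma/2}$ still dominates $\exp(O(\sqrt{\log M\log_2 M/\log_3 M}))$. This works precisely because the top of the prime range is $\exp((\log_2 M)^\gamma)\cdot(\text{polylog})$ with $\gamma<1/2$, so $\log M/(\log_2 M)^\gamma$ grows faster than $\sqrt{\log M\log_2 M/\log_3 M}$; I would make this comparison explicit at the end. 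A secondary technical point is justifying that replacing $\mathcal M$ by the full support of $f$ in the numerator, and then factoring out $\prod_{\mathfrak p}(1+f(\mathfrak p)^2)$, costs only a bounded factor relative to $\sum_{\mathfrak m\in\mathcal M}f^2(\mathfrak m)$ — this is the content of the "second auxiliary result" referenced after the proposition, so here I would simply cite it in the form it will be proved, or inline the short argument that $\sum_{\mathfrak m\in\mathcal M}f^2(\mathfrak m)\gg\prod_{\mathfrak p}(1+f(\mathfrak p)^2)$ since the $\mathcal M$-defining restrictions remove only a negligible proportion of the $\ell^2$ mass.
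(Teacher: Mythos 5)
Your argument is correct in outline but takes a genuinely different route from the paper's, and it is more roundabout. The paper's proof uses the divisor-closedness of $\mathcal M$ to factor out $\sum_{\mathfrak n\in\mathcal M}f^2(\mathfrak n)$ \emph{exactly}: writing the numerator as $\sum_{\mathfrak n\in\mathcal M}f^2(\mathfrak n)\sum_{\mathfrak a\mid\mathfrak n,\,N\mathfrak a>\sqrt D}\frac{1}{f(\mathfrak a)\sqrt{N\mathfrak a}}$, then applying a crude Rankin trick with fixed exponent $1/4$ to the inner sum, and finally using the fact that any $\mathfrak n\in\mathcal M$ has only $O(\log M/\log_3 M)$ prime ideal factors to bound the resulting Euler product $\prod_{\mathfrak p\mid\mathfrak n}(1+1/(f(\mathfrak p)(N\mathfrak p)^{1/4}))$ by $\exp(O((\log M)^{3/4+o(1)}))=o(D^{1/8})$. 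This avoids any comparison of $\ell^2$ masses and needs no tuning of the Rankin exponent. You instead relax $\mathcal M$ to the full support of $f$ first, swap the order so that $\mathfrak k=\mathfrak n/\mathfrak m$ is the outer variable, factor out the full $\prod_{\mathfrak p}(1+f(\mathfrak p)^2)$, and then must supply a lower bound $\sum_{\mathfrak m\in\mathcal M}f^2(\mathfrak m)\gg\prod_{\mathfrak p}(1+f(\mathfrak p)^2)$ — a true statement, but one requiring an argument parallel to Lemma~\ref{MproSupp} that the paper never needs in this lemma. Both approaches are legitimate; the paper's is shorter and self-contained, while yours would force a small forward reference or duplication of the $\ell^2$ mass bound.

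One concrete arithmetic slip: you write that $\sigma$ should be a small constant times $1/\log(\exp(\lfloor(\log_2 M)^\gamma\rfloor)\log M\log_2 M)\asymp(\log_2 M)^{-\gamma}$, but since $\gamma<1$ the dominant term in that logarithm is $\log_2 M$, not $(\log_2 M)^\gamma$, so in fact $\sigma\asymp 1/\log_2 M$ (which is what you first said). With $\sigma\asymp(\log_2 M)^{-\gamma}$ the factors $(N\mathfrak p)^\sigma$ at the top of the range are $\exp(c(\log_2 M)^{1-\gamma})\to\infty$, contradicting your assertion that they stay bounded. With the corrected $\sigma\asymp 1/\log_2 M$ the argument closes: $(N\mathfrak p)^\sigma\ll 1$ throughout, the exponent sum is $O\big(\sqrt{\log M\log_2 M/\log_3 M}\big)$, and $D^{-\sigma/2}\le M^{-\sigma/2}=\exp(-c\log M/\log_2 M)$ dominates because $\log M\log_3 M\gg(\log_2 M)^3$. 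Also a minor point: for squarefree ideals with $(\mathfrak m,\mathfrak k)=1$ the identity $f(\mathfrak m)f(\mathfrak m\mathfrak k)=f(\mathfrak m)^2 f(\mathfrak k)$ is exact multiplicativity; the bound $f(\mathfrak p)\le 1$ plays no role there and the inequality you wrote should be an equality.
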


\begin{proof}
    Observe that
    \[\sum_{\substack{\mathfrak{m}, \mathfrak{n} \in \mathcal{M}, \text{ }\mathfrak{m} | \mathfrak{n} \\ N(\mathfrak{n}/\mathfrak{m}) > \sqrt{D}}} \frac{f(\mathfrak{m})f(\mathfrak{n})}{N(\mathfrak{n}/\mathfrak{m})} = \sum_{\mathfrak{n} \in \mathcal{M}} f^2(\mathfrak{n}) \sum_{\substack{\mathfrak{a} | \mathfrak{n} \\ N\mathfrak{a} > \sqrt{D}}} \frac{1}{f(\mathfrak{a})\sqrt{N\mathfrak{a}}}\]
    and that
    \[\sum_{\substack{\mathfrak{a} | \mathfrak{n} \\ N\mathfrak{a} > \sqrt{D}}} \frac{1}{f(\mathfrak{a})\sqrt{N\mathfrak{a}}} < \frac{1}{D^{1/8}} \sum_{\mathfrak{a} | \mathfrak{n}} \frac{1}{f(\mathfrak{a})(N\mathfrak{a})^{1/4}} = \frac{1}{D^{1/8}} \prod_{\mathfrak{p}|\mathfrak{n}} \left(1 + \frac{1}{f(\mathfrak{p})(N\mathfrak{p})^{1/4}}\right).\]
    Notice that each factor in the last product is $1+o(1)$ and there are at most $O(\log M/ \log_3 M)$ factors because $\mathfrak{n}$ is in $\mathcal{M}$. Since $M \le D$, this yields
    \[\sum_{\substack{\mathfrak{a} | \mathfrak{n} \\ N\mathfrak{a} > \sqrt{D}}} \frac{1}{f(\mathfrak{a})\sqrt{N\mathfrak{a}}} = o(1),\]
    as desired.
\end{proof}

Next we show that $\mathcal{M}$ can be replaced by entire support of $f$. 

\begin{lemma}\label{MproSupp} We have that
\[
    \sum_{\mathfrak{m}, \mathfrak{n} \in \mathcal{M}, \text{ }\mathfrak{m} | \mathfrak{n}} \frac{f(\mathfrak{m})f(\mathfrak{n})}{\sqrt{N(\mathfrak{n}/\mathfrak{m})}}   \ge (1 - o(1))\sum_{\mathfrak{m} | \mathfrak{n}} \frac{f(\mathfrak{m})f(\mathfrak{n})}{\sqrt{N(\mathfrak{n}/\mathfrak{m})}}.   \]
\end{lemma}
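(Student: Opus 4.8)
The plan is to control the difference between the full sum over the support of $f$ and the restricted sum over $\mathcal{M}$. Write $\mathcal{S}$ for the entire support of $f$ (all squarefree ideals generated by the prime ideals $\mathfrak{p}$ above primes in the prescribed range), so that $\mathcal{M} \subseteq \mathcal{S}$. The pairs $(\mathfrak{m}, \mathfrak{n})$ with $\mathfrak{m} \mid \mathfrak{n}$ and $\mathfrak{m}, \mathfrak{n} \in \mathcal{S}$ that fail to both lie in $\mathcal{M}$ are precisely those for which $\mathfrak{n} \notin \mathcal{M}$ (note that if $\mathfrak{n} \in \mathcal{M}$ then every divisor of $\mathfrak{n}$ is also in $\mathcal{M}$, since the defining constraint on $\mathcal{M}$ — having few prime factors in each dyadic block $\mathbb{P}_k$ — is inherited by divisors). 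So it suffices to show
\[
\sum_{\substack{\mathfrak{n} \in \mathcal{S} \setminus \mathcal{M}}} \sum_{\substack{\mathfrak{m} \in \mathcal{S}, \, \mathfrak{m} \mid \mathfrak{n}}} \frac{f(\mathfrak{m})f(\mathfrak{n})}{\sqrt{N(\mathfrak{n}/\mathfrak{m})}} = o(1) \cdot \sum_{\substack{\mathfrak{m} \mid \mathfrak{n}}} \frac{f(\mathfrak{m})f(\mathfrak{n})}{\sqrt{N(\mathfrak{n}/\mathfrak{m})}}.
\]

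First I would record the key multiplicative identity: since $f$ is multiplicative and supported on squarefree ideals, writing $\mathfrak{a} = \mathfrak{n}/\mathfrak{m}$,
\[
\sum_{\mathfrak{m} \mid \mathfrak{n}} \frac{f(\mathfrak{m})f(\mathfrak{n})}{\sqrt{N(\mathfrak{n}/\mathfrak{m})}} = \sum_{\mathfrak{n} \in \mathcal{S}} f(\mathfrak{n}) \sum_{\mathfrak{a} \mid \mathfrak{n}} \frac{f(\mathfrak{n}/\mathfrak{a})}{\sqrt{N\mathfrak{a}}} = \prod_{\mathfrak{p}} \left( 1 + f(\mathfrak{p})^2 + \frac{2 f(\mathfrak{p})}{\sqrt{N\mathfrak{p}}} \right),
\]
where the product is over the prime ideals generating $\mathcal{S}$; and more generally each $\mathfrak{n}$ contributes the corresponding sub-product over its prime factors. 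The full sum therefore equals a product over $\mathbb{P}$, and the left-hand side of the displayed bound is $\prod_{\mathfrak{p}}(1 + f(\mathfrak{p})^2)$ times a convergent factor. The strategy for the tail is then the standard Rankin trick used in \cite{bondarenko2017large}: for the $k$-th dyadic block, a term with $\mathfrak{n} \notin \mathcal{M}$ has, for some $k$, more than $a \log M / (k^2 \log_3 M)$ prime factors in $\mathbb{P}_k$; insert a factor $2^{j_k - a\log M/(k^2 \log_3 M)}$ (at least $1$ on this set, where $j_k$ is the actual count), sum freely over all of $\mathcal{S}$, and the resulting product over $\mathbb{P}_k$ becomes $\prod_{\mathfrak{p} \in \mathbb{P}_k}(1 + 2(f(\mathfrak{p})^2 + f(\mathfrak{p})/\sqrt{N\mathfrak{p}}) + \dots)$, producing an extra factor bounded by $\exp\!\big(O(\sum_{\mathfrak{p} \in \mathbb{P}_k} f(\mathfrak{p})^2)\big) = \exp(O(\log M / (k^2 \log_3 M)))$ against a saving of $2^{-a \log M/(k^2 \log_3 M)}$. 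Since $a \log 2 > $ the implied constant (this is exactly where $a > 2$ enters, as $2 \log 2 > 1$), each block contributes a factor that is $\exp(-c \log M/(k^2 \log_3 M))$ for some $c > 0$, and summing the geometric-type series over $k$ shows the whole tail is $o(1)$ relative to the full product.

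The main obstacle is bookkeeping the Rankin weights correctly across all $\lfloor (\log_2 M)^\gamma \rfloor - 1$ blocks simultaneously — one must either union-bound over which block $k$ is responsible for $\mathfrak{n} \notin \mathcal{M}$ and handle one bad block at a time while summing the other blocks freely, or apply the weight in all blocks at once; the former is cleaner and the error from a single bad block $k$ is $\exp(-(a\log 2 - 1 + o(1)) \log M/(k^2 \log_3 M))$ times the full product, and $\sum_k k^{-2}$ converges, so the union bound over $k$ still gives $o(1)$. The only subtlety is checking that the "free" sum over the good blocks reproduces (up to $1+o(1)$) the corresponding factors of the full product, which follows because each local factor is $1 + o(1)$ and there are $O(\log M/\log_3 M)$ of them in total — the same estimate already used in the proof of Lemma \ref{truncation}. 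I would also need the downward-closure of $\mathcal{M}$ noted above to be sure no pairs are missed; this is immediate from the definition.
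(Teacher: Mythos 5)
Your proposal follows the same overall route as the paper: reduce to the tail $\mathfrak{n} \notin \mathcal{M}$ via downward-closure of $\mathcal{M}$, union-bound over the blocks $k$, factor the sums into Euler products, and apply Rankin's trick to each bad block. Two points need fixing, the second of which is a genuine gap.

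First, a small arithmetic slip: the Euler factor of the full double sum is $1 + f(\mathfrak{p})^2 + f(\mathfrak{p})/\sqrt{N\mathfrak{p}}$, not $1 + f(\mathfrak{p})^2 + 2f(\mathfrak{p})/\sqrt{N\mathfrak{p}}$. Each $\mathfrak{p} \mid \mathfrak{n}$ either divides $\mathfrak{m}$, contributing $f(\mathfrak{p})^2$, or divides $\mathfrak{n}/\mathfrak{m}$, contributing $f(\mathfrak{p})\cdot (N\mathfrak{p})^{-1/2}$; the two cases are not symmetric and there is no doubling.

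Second, and more importantly, fixing the Rankin weight at $b = 2$ does not close the estimate. You assert the saving $2^{-a\log M/(k^2\log_3 M)}$ beats the cost $\exp\bigl(O(\sum_{\mathfrak{p}\in\mathbb{P}_k}f(\mathfrak{p})^2)\bigr)$ because ``$2\log 2 > 1$'', i.e.\ you take the implied constant to be $1$. But that is the constant from the Bondarenko--Seip rational setting. Here each rational prime in a block can lie under up to two prime ideals, so
\[
\sum_{\mathfrak{p}\in\mathbb{P}_k} f(\mathfrak{p})^2 \le (2+o(1))\,\frac{\log M}{k^2\log_3 M},
\]
as the paper records explicitly. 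Writing the weight as $b^{j_k - a\log M/(k^2\log_3 M)}$ for general $b>1$, the net exponent per block is
\[
\bigl(2(b-1) - a\log b + o(1)\bigr)\frac{\log M}{k^2\log_3 M}.
\]
At $b = 2$ this is $(2 - a\log 2 + o(1))\log M/(k^2\log_3 M)$, which is \emph{not} negative unless $a > 2/\log 2 \approx 2.885$ — but $a$ is only assumed to lie in $(2, 1/\gamma)$ and may be arbitrarily close to $2$. The paper instead takes $b$ close to $1$: the function $2(b-1) - a\log b$ vanishes at $b=1$ with derivative $2 - a < 0$, so it is negative for $b$ slightly above $1$ and any $a > 2$. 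You need the same flexibility in $b$; without it the union bound over $k$ can be divergent rather than $o(1)$.

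Everything else in your sketch — the downward-closure observation, factoring into Euler products, checking that $f(\mathfrak{p})/\sqrt{N\mathfrak{p}} = o(f(\mathfrak{p})^2)$, and summing the $\exp(-c\log M/(k^2\log_3 M))$ contributions over $k$ — matches the paper and is sound.
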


\begin{proof}
    We start by noticing that 
    \[\sum_{\mathfrak{m}, \mathfrak{n} \in \mathcal{M}, \text{ }\mathfrak{m} | \mathfrak{n}} \frac{f(\mathfrak{m})f(\mathfrak{n})}{\sqrt{N(\mathfrak{n}/\mathfrak{m})}} = \sum_{\mathfrak{m} | \mathfrak{n}} \frac{f(\mathfrak{m})f(\mathfrak{n})}{\sqrt{N(\mathfrak{n}/\mathfrak{m})}} - \sum_{\mathfrak{n} \notin \mathcal{M}, \text{ }\mathfrak{m} | \mathfrak{n}} \frac{f(\mathfrak{m})f(\mathfrak{n})}{\sqrt{N(\mathfrak{n}/\mathfrak{m})}}\]
    because $\mathcal{M}$ is divisor-closed (that is, all the divisors of an ideal of $\mathcal{M}$ are also in $\mathcal{M}$). Write $\mathcal{N}_k$ for the set of ideals with at least $a\log M/k^2 \log_3 M$ prime ideal factors in $\mathbb{P}_k$ and $\mathcal{N}_{k}'$ for its subset of ideals that have no prime factors outside $\mathbb{P}_k$. Then
    \[\sum_{\mathfrak{n} \notin \mathcal{M}, \text{ }\mathfrak{m} | \mathfrak{n}} \frac{f(\mathfrak{m})f(\mathfrak{n})}{\sqrt{N(\mathfrak{n}/\mathfrak{m})}} \le \sum_{k = 1}^{\lfloor (\log_2 M)^\gamma \rfloor-1} \sum_{\mathfrak{n} \in \mathcal{N}_k, \text{ } \mathfrak{m} | \mathfrak{n}} \frac{f(\mathfrak{m})f(\mathfrak{n})}{\sqrt{N(\mathfrak{n}/\mathfrak{m})}}.\]
    \par We now write these sums as products. We start with
    \[\sum_{\mathfrak{m} | \mathfrak{n}} \frac{f(\mathfrak{m})f(\mathfrak{n})}{\sqrt{N(\mathfrak{n}/\mathfrak{m})}} = \sum_{\mathfrak{n}} \frac{f(\mathfrak{n})}{\sqrt{N\mathfrak{n}}} \sum_{\mathfrak{m} | \mathfrak{n}} f(\mathfrak{m})\sqrt{N\mathfrak{m}} = \prod_{\mathfrak{p}} \left(1 + f(\mathfrak{p})^2 + \frac{f(\mathfrak{p})}{\sqrt{N\mathfrak{p}}}\right).\]
    The inner sum over $\mathcal{N}_k$ can be factored in the same way:
    \[\sum_{\mathfrak{n} \in \mathcal{N}_k, \text{ } \mathfrak{m} | \mathfrak{n}} \frac{f(\mathfrak{m})f(\mathfrak{n})}{\sqrt{N(\mathfrak{n}/\mathfrak{m})}} = \prod_{\mathfrak{p} \notin \mathbb{P}_k} \left(1 + f(\mathfrak{p})^2 + \frac{f(\mathfrak{p})}{\sqrt{N\mathfrak{p}}}\right) \Bigg(  \sum_{\mathfrak{n} \in \mathcal{N}_k'} \frac{f(\mathfrak{n})}{\sqrt{N\mathfrak{n}}} \sum_{\mathfrak{m} | \mathfrak{n}} f(\mathfrak{m})\sqrt{N\mathfrak{m}}\Bigg).\]
    Therefore the ratio between the sum over $\mathfrak{n} \in \mathcal{N}_k$ and the unrestricted sum is
    \begin{align*}
        & \ddfrac{1}{\prod_{\mathfrak{p} \in \mathbb{P}_k} \left(1 + f(\mathfrak{p})^2 + \frac{f(\mathfrak{p})}{\sqrt{N\mathfrak{p}}}\right)}\sum_{\mathfrak{n} \in \mathcal{N}_k'} \frac{f(\mathfrak{n})}{\sqrt{N\mathfrak{n}}} \sum_{\mathfrak{m} | \mathfrak{n}} f(\mathfrak{m})\sqrt{N\mathfrak{m}} \\
        \le \text{ } & \ddfrac{1}{\prod_{\mathfrak{p} \in \mathbb{P}_k} \left(1 + f(\mathfrak{p})^2 \right)} \sum_{\mathfrak{n} \in \mathcal{N}_k'} f(\mathfrak{n})^2 \prod_{\mathfrak{p} \in \mathbb{P}_k } \left( 1 + \frac{1}{f(\mathfrak{p})\sqrt{N\mathfrak{p}}}\right).
    \end{align*}
    Since each factor in the rightmost product above is at most $1 + (k+1)\sqrt{\log_3 M/\log M \log_2 M}$ and there are $|\mathbb{P}_k| < 2e^{k+1}\log M$ such factors by the prime number theorem, we obtain that
    \[\prod_{\mathfrak{p} \in \mathbb{P}_k } \left( 1 + \frac{1}{f(\mathfrak{p})\sqrt{N\mathfrak{p}}}\right) \le \exp\left(2e^{k+1}(k+1) \sqrt{\frac{\log M \log_3 M}{\log_2 M}}\right) = \exp\left(o\left(\frac{\log M}{\log_3 M}\right)\frac{1}{k^2}\right).\]
    Next since each $\mathfrak{n}$ in $\mathcal{N}_k'$ has at least $a \log M/k^2 \log_3 M$ prime factors we get
    \begin{align*}
        \ddfrac{1}{\prod_{\mathfrak{p} \in \mathbb{P}_k} \left(1 + f(\mathfrak{p})^2 \right)} \sum_{\mathfrak{n} \in \mathcal{N}_k'} f(\mathfrak{n})^2 \le & \text{ }  \ddfrac{1}{\prod_{\mathfrak{p} \in \mathbb{P}_k} \left(1 + f(\mathfrak{p})^2 \right)} b^{\frac{-a\log M}{k^2 \log_3 M}} \prod_{\mathfrak{p} \in \mathbb{P}_k}(1 + bf(\mathfrak{p})^2) \\ \le & \text{ } b^{\frac{-a\log M}{k^2 \log_3 M}} \exp\Bigg((b-1)\sum_{\mathfrak{p}\in\mathbb{P}_k}f(\mathfrak{p})^2 \Bigg)
    \end{align*}
    for any $b > 1$. The sum in the exponential can be bounded by
    \begin{align*}\sum_{\mathfrak{p} \in \mathbb{P}_k} f(\mathfrak{p})^2 \le & \frac{\log M \log_2 M}{k^2 \log_3 M} \sum_{\mathfrak{p} \in \mathbb{P}_k} \frac{1}{p} \le (2+o(1))\frac{\log M \log_2 M}{k^2 \log_3 M} \int_{e^k \log M \log_2 M}^{e^{k+1}\log M \log_2 M} \frac{1}{x \log x} \,dx
    \end{align*}
    by the prime number theorem, and the last expression evaluates to $(2+o(1))\log M/k^2 \log_3 M$. Here the factor $2$ appears because there can be at most two prime ideals over each prime.

    \par Therefore, we have shown that
    \[\sum_{\mathfrak{n} \in \mathcal{N}_k, \text{ } \mathfrak{m} | \mathfrak{n}} \frac{f(\mathfrak{m})f(\mathfrak{n})}{\sqrt{N(\mathfrak{n}/\mathfrak{m})}} \le \sum_{\mathfrak{m} | \mathfrak{n}} \frac{f(\mathfrak{m})f(\mathfrak{n})}{\sqrt{N(\mathfrak{n}/\mathfrak{m})}} \exp \left((2(b-1) - a\log b + o(1))\frac{\log M}{k^2 \log_3 M}\right)\]
    for each $k$. We now choose $b$ close to $1$ to make our exponent negative, which can be done since $a > 2$. By adding these over all $k$ and combining with our initial observation we get
    \begin{align*}\sum_{\mathfrak{n} \notin \mathcal{M}, \text{ }\mathfrak{m} | \mathfrak{n}} \frac{f(\mathfrak{m})f(\mathfrak{n})}{\sqrt{N(\mathfrak{n}/\mathfrak{m})}} \le & \, \sum_{\mathfrak{m} | \mathfrak{n}} \frac{f(\mathfrak{m})f(\mathfrak{n})}{\sqrt{N(\mathfrak{n}/\mathfrak{m})}} \sum_{k = 1}^{\lfloor (\log_2 M)^\gamma \rfloor -1}\exp\left(-\frac{c}{k^2}\frac{\log M}{\log_3 M} \right) \\
     = & \, o\left(\sum_{\mathfrak{m} | \mathfrak{n}} \frac{f(\mathfrak{m})f(\mathfrak{n})}{\sqrt{N(\mathfrak{n}/\mathfrak{m})}}\right)\end{align*}
     and therefore
     \[\sum_{\mathfrak{m}, \mathfrak{n} \in \mathcal{M}, \text{ }\mathfrak{m} | \mathfrak{n}} \frac{f(\mathfrak{m})f(\mathfrak{n})}{\sqrt{N(\mathfrak{n}/\mathfrak{m})}}   \ge (1 - o(1))\sum_{\mathfrak{m} | \mathfrak{n}} \frac{f(\mathfrak{m})f(\mathfrak{n})}{\sqrt{N(\mathfrak{n}/\mathfrak{m})}},\]
     as desired. \end{proof}

Lastly we compute the expression over the entire support of $f$.

\begin{lemma}\label{irrestrito} We have that
    \begin{multline*}\ddfrac{1}{\sum_{\mathfrak{m}} f^2(\mathfrak{m})} \sum_{\mathfrak{m} | \mathfrak{n}} \frac{f(\mathfrak{m})f(\mathfrak{n})}{\sqrt{N(\mathfrak{n}/\mathfrak{m})}} = \\ \exp \Bigg((1+o(1)) \sqrt{\frac{\log M \log_2 M}{\log_3 M}} \sum_{\mathfrak{p} \in \mathbb{P}} \frac{1}{\sqrt{N\mathfrak{p}}} \frac{1}{\sqrt{p}(\log p - \log_2 M - \log_3 M)}\Bigg). \end{multline*}
\end{lemma}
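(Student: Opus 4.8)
The plan is to use that $f$ is multiplicative and supported on squarefree ideals to factor both the numerator and the denominator into Euler products over $\mathbb{P}$, and then to compare the two products prime by prime. First I would record the factorizations
\[
\sum_{\mathfrak{m} \mid \mathfrak{n}} \frac{f(\mathfrak{m})f(\mathfrak{n})}{\sqrt{N(\mathfrak{n}/\mathfrak{m})}} = \prod_{\mathfrak{p} \in \mathbb{P}} \left(1 + f(\mathfrak{p})^2 + \frac{f(\mathfrak{p})}{\sqrt{N\mathfrak{p}}}\right)
\qquad\text{and}\qquad
\sum_{\mathfrak{m}} f^2(\mathfrak{m}) = \prod_{\mathfrak{p} \in \mathbb{P}}\bigl(1 + f(\mathfrak{p})^2\bigr),
\]
the first of which already appears in the proof of Lemma~\ref{MproSupp} (reindex the pair $\mathfrak{m}\mid\mathfrak{n}$ by $\mathfrak{m}$ and $\mathfrak{k}=\mathfrak{n}/\mathfrak{m}$, which are coprime since the support of $f$ consists of squarefree ideals, and use $f(\mathfrak{m}\mathfrak{k})=f(\mathfrak{m})f(\mathfrak{k})$), and the second of which is immediate. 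Dividing, the quantity to be estimated equals
\[
\prod_{\mathfrak{p} \in \mathbb{P}}\left(1 + \frac{f(\mathfrak{p})/\sqrt{N\mathfrak{p}}}{1 + f(\mathfrak{p})^2}\right),
\]
so it is enough to evaluate $\sum_{\mathfrak{p} \in \mathbb{P}} \log\!\left(1 + \frac{f(\mathfrak{p})/\sqrt{N\mathfrak{p}}}{1 + f(\mathfrak{p})^2}\right)$.

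Next I would establish a uniform smallness bound for $f(\mathfrak{p})$. Every prime $p$ lying below some $\mathfrak{p}\in\mathbb{P}$ satisfies $p > e\log M \log_2 M$, so $\sqrt{p}\,(\log p - \log_2 M - \log_3 M) > \sqrt{e\log M \log_2 M}$ and hence $f(\mathfrak{p}) < 1/\sqrt{e\log_3 M} = o(1)$, uniformly in $\mathfrak{p}$; moreover $N\mathfrak{p}\ge p\to\infty$. Consequently $f(\mathfrak{p})^2 = o(1)$ and $f(\mathfrak{p})/\sqrt{N\mathfrak{p}} = o(1)$ uniformly, so that $\log\!\left(1 + \frac{f(\mathfrak{p})/\sqrt{N\mathfrak{p}}}{1 + f(\mathfrak{p})^2}\right) = (1+o(1))\,f(\mathfrak{p})/\sqrt{N\mathfrak{p}}$ with the $o(1)$ uniform in $\mathfrak{p}$. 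Summing over $\mathbb{P}$ and substituting the definition of $f(\mathfrak{p})$ yields
\[
\log\!\left(\frac{1}{\sum_{\mathfrak{m}} f^2(\mathfrak{m})}\sum_{\mathfrak{m}\mid\mathfrak{n}}\frac{f(\mathfrak{m})f(\mathfrak{n})}{\sqrt{N(\mathfrak{n}/\mathfrak{m})}}\right) = (1+o(1))\sum_{\mathfrak{p}\in\mathbb{P}}\frac{f(\mathfrak{p})}{\sqrt{N\mathfrak{p}}} = (1+o(1))\sqrt{\frac{\log M\log_2 M}{\log_3 M}}\sum_{\mathfrak{p}\in\mathbb{P}}\frac{1}{\sqrt{N\mathfrak{p}}}\frac{1}{\sqrt{p}(\log p - \log_2 M - \log_3 M)},
\]
which is the assertion of the lemma after exponentiating.

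There is no serious obstacle here; the only point deserving care is that the remainder in $\log(1+x)=x+O(x^2)$ must be controlled uniformly over all $\mathfrak{p}\in\mathbb{P}$, which is exactly what the bound $f(\mathfrak{p}) < 1/\sqrt{e\log_3 M}$ provides, so that the factor $(1+o(1))$ may legitimately be pulled outside the sum. I would also note in passing that in the range relevant to Theorem~\ref{main} the exponent $\sum_{\mathfrak{p}\in\mathbb{P}} f(\mathfrak{p})/\sqrt{N\mathfrak{p}}$ grows without bound, so the multiplicative $(1+o(1))$ here is genuinely stronger than the additive $o(1)$ that is retained in Proposition~\ref{proposicao}.
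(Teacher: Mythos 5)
Your proof is correct and follows essentially the same route as the paper: factor both sums into Euler products over $\mathbb{P}$ using multiplicativity, divide, and exploit the uniform smallness of $f(\mathfrak{p})$ to turn $\log(1+x)$ into $(1+o(1))x$. The one small divergence is cosmetic: you derive the bound $f(\mathfrak{p}) < 1/\sqrt{e\log_3 M}$ while the paper asserts $f(\mathfrak{p}) < 1/\log_3 M$ (your constant is actually the sharp one at the left endpoint of the prime range), but both give $f(\mathfrak{p}) = o(1)$, which is all that is used.
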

\begin{proof}
    This follows by writing the sums as products. We have
    \begin{align*} \ddfrac{1}{\sum_{\mathfrak{m}} f^2(\mathfrak{m})} \sum_{\mathfrak{m} | \mathfrak{n}} \frac{f(\mathfrak{m})f(\mathfrak{n})}{\sqrt{N(\mathfrak{n}/\mathfrak{m})}} = & \, \ddfrac{1}{\prod_\mathfrak{p} (1+f(\mathfrak{p})^2)} \prod_\mathfrak{p}\left( 1 + f(\mathfrak{p})^2 + \frac{f(\mathfrak{p})}{\sqrt{N\mathfrak{p}}}\right) \\ = & \, \prod_\mathfrak{p} \left(1 + \frac{f(\mathfrak{p})}{\sqrt{N\mathfrak{p}}(1+f(\mathfrak{p})^2)}\right) \\ = & \, \exp\Bigg((1+o(1)) \sum_{\mathfrak{p}} \frac{f(\mathfrak{p})}{\sqrt{N\mathfrak{p}}}\Bigg) \end{align*}
    using that $f(\mathfrak{p}) < 1/\log_3 M$, which gives the desired expression.
\end{proof}

We proceed to prove Proposition \ref{proposicao}:

\begin{proof}[Proof of Proposition \ref{proposicao}] We start by showing that $\mathcal{M}$ has size at most $M$. Using that
    \[\sqrt{2 \pi} m^{m+\frac{1}{2}} e^{-m} \le m! \le e m^{m+\frac{1}{2}} e^{-m}\]
    for all positive integers $m$, we get that for all $1 \le n \le m-1$,
    \[\binom{m}{n} \le \frac{m^m}{n^n(m-n)^{n-m}}\]
    and from this we get the binomial bound
    \[\binom{m}{n} \le \left(\frac{m}{n}\right)^n \left(\frac{m}{m-n}\right)^{m-n} \le \exp(n\log(m/n) + n).\]
    \par Next notice the interval $(e^k \log M \log_2 M, e^{k+1} \log M \log_2 M]$ has less than $e^{k+1}\log M$ primes by the prime number theorem. Since each prime is below at most two prime ideals, $|\mathbb{P}_k| < 2e^{k+1}\log M$. Therefore, the number of choices for the $\mathbb{P}_k$-primes in an ideal of $\mathcal{M}$ is less than
    \[ \sum_{l < \frac{a\log M}{k^2 \log_3 M}} \binom{\lfloor 2e^{k+1} \log M \rfloor}{l} \le 2 \binom{\lfloor 2e^{k+1}\log M \rfloor }{\lfloor \frac{a\log M}{k^2 \log_3 M}\rfloor} \le \exp \left( \frac{a}{k} \frac{\log M}{\log_3 M} + O\left(\frac{\log M \log_4 M}{k^2 \log_3 M}\right)\right),\]
    using that
    \[\binom{m}{n-1} \le \frac{1}{2}\binom{m}{n} \text{ for } 3n-1 \le m\]
    in the first inequality and the binomial bound in the second. Therefore
    \begin{align*}
        |\mathcal{M}| & \le \exp\left(\sum_{k = 1}^{\lfloor (\log_2 M)^\gamma \rfloor-1} \frac{a}{k} \frac{\log M}{\log_3 M} + O\left(\frac{\log M \log_4 M}{k^2 \log_3 M}\right)\right) \\
        & \le \exp ((a\gamma + o(1))\log M) \le M.
    \end{align*}
    
    We now establish the stated inequality. Using Lemma \ref{truncation},
    \[\ddfrac{1}{\sum_{\mathfrak{m} \in \mathcal{M}} f^2(\mathfrak{m})} \sum_{\substack{\mathfrak{m}, \mathfrak{n} \in \mathcal{M}, \text{ }\mathfrak{m} | \mathfrak{n} \\ N(\mathfrak{n}/\mathfrak{m}) < \sqrt{D}}} \frac{f(\mathfrak{m})f(\mathfrak{n})}{\sqrt{N(\mathfrak{n}/\mathfrak{m})}} = \ddfrac{1}{\sum_{\mathfrak{m} \in \mathcal{M}} f^2(\mathfrak{m})} \sum_{\mathfrak{m}, \mathfrak{n} \in \mathcal{M}, \text{ }\mathfrak{m} | \mathfrak{n}} \frac{f(\mathfrak{m})f(\mathfrak{n})}{\sqrt{N(\mathfrak{n}/\mathfrak{m})}} - o(1)\]
    and by Lemma \ref{MproSupp},
    \[\ddfrac{1}{\sum_{\mathfrak{m} \in \mathcal{M}} f^2(\mathfrak{m})} \sum_{\substack{\mathfrak{m}, \mathfrak{n} \in \mathcal{M}, \text{ }\mathfrak{m} | \mathfrak{n} \\ N(\mathfrak{n}/\mathfrak{m}) < \sqrt{D}}} \frac{f(\mathfrak{m})f(\mathfrak{n})}{\sqrt{N(\mathfrak{n}/\mathfrak{m})}} \ge (1-o(1))\ddfrac{1}{\sum_{\mathfrak{m}} f^2(\mathfrak{m})} \sum_{\mathfrak{m} | \mathfrak{n}} \frac{f(\mathfrak{m})f(\mathfrak{n})}{\sqrt{N(\mathfrak{n}/\mathfrak{m})}}.\]
    The result then follows from Lemma \ref{irrestrito}.
\end{proof}


\section{Proofs of Theorems \ref{main} and \ref{secondmain}}\label{proof}

In the work of Bondarenko and Seip \cite{bondarenko2017large}, the analog of Proposition \ref{proposicao} is already enough to prove the desired lower bound. The main difference here is that the lower bound from Proposition \ref{proposicao} has a sum which runs over prime ideals instead of primes, and that only gets large when many small primes split. In fact, when all primes involved are inert, the bound in Proposition \ref{proposicao} is no better than a constant -- worse than the bound coming from the first moment calculation, $\frac{1}{2}\log D$.

\par Unfortunately, GRH alone only guarantees that primes greater than $\log^2 D$ split, and it is conceivable that all primes involved in Proposition \ref{proposicao} are inert. The delicate construction of the maximizing set $\mathcal{M}$ seems to depend crucially on primes of size $(\log D)^{1 + \epsilon}$, which means that even GRH does not give a bound of the intended order of magnitude for every discriminant.

\par In order to capture the splitting of primes of size $(\log D)^{1+\epsilon}$, we consider an average. We expect each prime $p$ to split in approximately half of fundamental discriminants between $-2X$ and $-X$, and that will contribute to the sum appearing in the exponent once we take the product over all discriminants. We proceed to the proof of Theorem \ref{main}:

\begin{proof}[Proof of Theorem \ref{main}] Choose $\gamma < 1/2$ and $0 < \epsilon < .05$ such that $\gamma(\frac{1}{2}-\epsilon) > \delta$. In Proposition \ref{proposicao}, we choose $M = c\epsilon X^{1/4 - \epsilon}$ for some absolute constant $c$. The bound for the maximal central value in (\ref{maximizacao}) holds as long as \[c\epsilon X^{1/4 - \epsilon} \le \frac{h_D}{3 D^{\frac{1}{4}}\log D}.\]
We expect this to be true by Siegel's theorem, with the defect of the bound being ineffective on $D$. To fix this issue, we use a result of Tatuzawa that gives an effective version of Siegel's theorem that holds for all fundamental discriminants $-D$ with at most one exception (see Theorem 22.8 in \cite{iwaniec2021analytic}), which implies that the inequality above is true for all $D$ sufficiently large with respect to $\epsilon$ for some absolute constant $c$, allowing for at most one exceptional value of $D$.

\par We now multiply the lower bounds given by Proposition \ref{proposicao} over all fundamental discriminants except possibly the exceptional value of $D$, for which $1$ is a trivial lower bound for the maximal central value. In the imaginary quadratic field $\mathbb{Q}(\sqrt{-D})$, the number of integral ideals of norm $p$ is $1+(\frac{-D}{p})$. Therefore
\[\sideset{}{^*}\sum_{D \asymp X} 1 + \left(\frac{-D}{p}\right)\]
is the number of integral ideals of norm $p$ in fields with non-exceptional fundamental discriminant between $-2X$ and $-X$ (recall that the summation index $D \asymp X$ indicates this range, and we use the star on the summation to indicate non-exceptional discriminants).

\par By Lemma \ref{crivo}, the sum above is $(1+o(1))N_X$ given that $p = o(\sqrt{X})$. Therefore
\[\Bigg({\sideset{}{^\flat} \prod_{X \le D \le 2X} M_D}\Bigg)^{\frac{1}{N_X}} \ge \exp \Bigg(\left(\frac{1}{2} - \epsilon + o(1)\right)\sqrt{\frac{\log X \log_3 X}{\log_2 X}} \sum_{p \in I} \frac{1}{p(\log p - \log_2 M - \log_3 M )}\Bigg).\]
By the prime number theorem, the summation above is
\begin{align*} & (1+o(1)) \int_{e \log M \log_2 M}^{\exp(\lfloor(\log_2 M)^\gamma\rfloor)\log M \log_2M} \frac{1}{x(\log x - \log_2 M - \log_3 M)} \, \frac{dx}{\log x} \\  = \,& (1+o(1)) \int_{1 + \log_2 M + \log_3 M}^{\lfloor(\log_2 M)^\gamma\rfloor + \log_2 M + \log_3 M} \frac{1}{t(t-\log_2 M - \log_3 M)} \, dt \\ =\, &  (\gamma+o(1)) \frac{\log_3 M}{\log_2 M}.\end{align*}
The result follows by noting that $\gamma(\frac{1}{2} - \epsilon) > \delta$. \end{proof}

We finish by proving Theorem \ref{secondmain}:

\begin{proof}[Proof of Theorem \ref{secondmain}] Take $\gamma = 1/3$ in the construction of $\mathbb{P}$ as in Proposition \ref{proposicao}, noting that $M \le D$. The set $\mathcal{M}$ has size at most $M$, satisfying the bound (\ref{sizebound}). It follows by (\ref{maximizacao}) that
\begin{multline*}\max_{\chi \in \widehat{\mathcal{C}}}L(1/2, \chi) \ge \\ \exp \Bigg((1 + o(1))\sqrt{\frac{\log M \log_2 M}{\log_3 M}} \sum_{\mathfrak{p} \in \mathbb{P}} \frac{1}{\sqrt{N\mathfrak{p}}} \frac{1}{\sqrt{p}(\log p - \log_2 M - \log_3 M)} + O(1)\Bigg).\end{multline*}
To remove the last error term we use the first moment over this family (see Theorem 1 in \cite{duke1995class}), which yields
\[\max_{\chi \in \widehat{\mathcal{C}}}L(1/2, \chi) \ge \exp(\log \log D + O(1)).\]
This, combined with the previous bound, completes the proof. \end{proof}

\bibliographystyle{unsrt}
\bibliography{main}{}

\end{document}